\numberwithin{equation}{section}
\newtheorem{theorem}{Theorem}[section]
\newtheorem{proposition}[theorem]{Proposition}
\newtheorem{corollary}[theorem]{Corollary}
\theoremstyle{remark}
\newtheorem{remark}[theorem]{Remark}
\newtheorem{definition}[theorem]{Definition}
\newtheorem{example}[theorem]{Example}
\title[ Asymptotic resemblance relations on Groups]{ Asymptotic resemblance relations on Groups}
\author[Sh. Kalantari]{Sh. Kalantari}
 \address[Sh. Kalantari]{Department of Basic Sciences, Babol Noshirvani University of Technology, Shariati Ave.,Babol, Iran, Post Code:47148-71167.}
 \email{shahab.kalantari@nit.ac.ir}
\subjclass[2020]{51F99, 53C23, 54F45, 54D35}
\keywords{asymptotic dimensiongrad, asymptotic resemblance, coarse structure, set theoretic coupling, topological coupling}
\begin{document}
\maketitle
\begin{abstract}
In this paper, we study properties of asymptotic resemblance relations induced by compatible coarse structures on groups. We generalize the notion of asymptotic dimensiongrad for groups with compatible coarse structures and show this notion is coarse invariant. We end by defining the notion of set theoretic coupling for groups with compatible coarse structures and showing this notion is the generalization of the notion of topological coupling for finitely generated groups. We show if two groups with compatible coarse structures admit a set theoretic coupling then they are asymptotic equivalent.

\end{abstract}
\section{Introduction}
The Gromov's approach of investigating finitely generated groups with large scale properties of their Cayley graphs (\cite{gro}) has been generalized rapidly for studying more general objects. The key tool for this purpose is the notion of \emph{coarse structure} introduced by Roe (\cite{Roe}). A coarse structure on a set $X$ is a family of subsets of $X\times X$ with some additional properties. Coarse structures have opened a new perspective for investigating more general forms of groups, for instance, \emph{topological groups}. There have been some attempts for this goal. For example in \cite{pr1}, the authors defined the large scale counterpart of topological groups. In fact in \cite{pr1} they defined compatible \emph{ball structures} on groups and for this reason they used the notion of \emph{group ideals}. Ball structures can be considered as alternative ways for defining coarse structures on sets. In \cite{Nicas} a way of defining coarse structures on general groups has been introduced. They used the notion of \emph{generating family} on a group $G$ for defining compatible coarse structures with the group structure of $G$. The notion of generating family and the notion of group ideal are closely related and they are equivalent in some ways. There are also other works in this area, one can see \cite{pr2,pr4,pr3} and the relatively new monograph \cite{Ros}. We survey elementary properties of coarse structures on nonempty sets and compatible coarse structures on groups in \S \ref{1.1}. As another way of defining large scale structures on a given set, the authors of \cite{me} introduced the notion of \emph{asymptotic resemblance relations} (AS.R). An asymptotic resemblance relation on a set $X$ is an equivalence relation on the family of all subsets of $X$ with two additional properties. It can be shown that every coarse structure on a set $X$ can induce an asymptotic resemblance relation on $X$ and almost all concepts in coarse geometry can be carried on to asymptotic resemblance spaces. In \S \ref{1.2} we discuss asymptotic resemblance relations and their basic properties briefly.\\
In Section \ref{sec2} we describe asymptotic resemblance relations on groups induced by compatible coarse structures. In this section, we drive special meanings of concepts in AS.R spaces for the AS.R relations induced by compatible coarse structures on groups. In particular, we show what are the notions of \emph{asymptotic disjointness} and \emph{compatible AS.R with the topology} equivalent to.\\
Section \ref{sec3} is devoted to generalize the concept of \emph{asymptotic dimensiongrad} for groups with compatible coarse structures. The notion of asymptotic dimensiongrad for metric spaces has been introduced in \cite{asdim} as an inductive way for finding dimensions in large scale. In this section, we describe the meaning of an \emph{asymptotic cut} between two asymptotic disjoint subsets of a coarse space and define the asymptotic dimensiongrad inductively by using this notion. In addition, we show asymptotic dimensiongrad is invariant under coarse equivalences and we study the relation between asymptotic cut and \emph{large scale separator}. Even though we define the asymptotic dimensiongrad for all coarse spaces, our main subject in this section is the asymptotic dimensiongrad of groups with compatible coarse structures.\\
The notion of \emph{topological coupling} has first appeared in \cite{gro}. A topological coupling for two finitely generated groups $G$ and $H$ is a locally compact topological space $X$ such that $G$ and $H$ act cocompactly and properly on $X$ from left and right respectively and these actions commute. It can be shown that two finitely generated groups are coarsely equivalent if and only if they admit a topological coupling (\S 0.2.$C_{2}^{\prime}$ of \cite{gro}). In Section \ref{sec4} we show how the notion of topological coupling can be generalized to the concept of \emph{set theoretic coupling} for groups with AS.R relations induced by compatible coarse structures. We show that if two groups admit a set theoretic coupling then they are asymptotic equivalent. This section contains some examples of set theoretic couplings for some important compatible coarse structures on groups.
\section{preliminaries}
\subsection{Coarse Structures}\label{1.1}
We start by recalling that a \emph{coarse structure} $\mathcal{E}$ on a nonempty set $X$ is a family of subsets of $X\times X$ such that each subset of an element of $\mathcal{E}$ is an element of $\mathcal{E}$ and for each $E,F\in \mathcal{E}$ the subsets $E\bigcup F$, $E^{-1}$ and $E\circ F$ belong to $\mathcal{E}$, where $E^{-1}=\{(y,x)\in X\times X\mid (x,y)\in E\}$ and $E\circ F$ denotes the set of all $(x,y)\in X\times X$ such that $(x,z)\in F$ and $(z,y)\in E$ for some $z\in X$ (\cite{Roe}). If $\mathcal{E}$ is a coarse structure on the set $X$, then the pair $(X,\mathcal{E})$ is called a coarse space. The coarse structure $\mathcal{E}$ on the set $X$ is called unitary if it contains the diagonal $\Delta=\{(x,x)\mid x\in X\}$ and it is called connected if for each $x,y\in X$, $\{(x,y)\}\in \mathcal{E}$. In the rest of this paper by a coarse structure we mean a unitary and connected coarse structure. Before going further we should recall that a subset $B$ of a coarse space $(X,\mathcal{E})$ is called bounded if $B\times B\in \mathcal{E}$. Suppose that $(X,\mathcal{E})$ is a coarse space and let $Y\subseteq X$ be a nonempty set. Define $\mathcal{E}_{Y}=\{E\bigcap (Y\times Y)\mid E\in \mathcal{E}\}$. Then $\mathcal{E}_{Y}$ is a coarse structure on $Y$, which is called the coarse structure induced on $Y$ by $\mathcal{E}$.
\begin{example}
The metric coarse structure $\mathcal{E}_{d}$ on a metric space $(X,d)$ is the family of all subsets $E$ of $X\times X$ such that for some $k>0$, $d(x,y)\leq k$ for all $(x,y)\in E$.
\end{example}
In the category of coarse spaces morphisms are called \emph{coarse maps}.
\begin{definition}
Let $(X,\mathcal{E})$ and $(Y,\mathcal{E}^{\prime})$ be two coarse spaces. A map $f:X\rightarrow Y$ is called a \emph{coarse map} if for each $E\in \mathcal{E}$, $(f\times f)(E)\in \mathcal{E}^{\prime}$ and the inverse image of each bounded subset of $Y$ is a bounded subset of $X$. A coarse map $f:X\rightarrow Y$ is called a \emph{coarse equivalence} if there exists a coarse map $g:Y\rightarrow X$ such that $\{(g(f(x)),x)\mid x\in X\}\in \mathcal{E}$ and $\{(f(g(y)),y)\mid y\in Y\}\in \mathcal{E}^{\prime}$. In this case we call $g$ a \emph{coarse inverse} of $f$.
\end{definition}
The following definition is from \cite{Nicas}.
\begin{definition}
Let $G$ be a group. For subsets $A$ and $B$ of $G$ suppose that $A^{-1}=\{g^{-1}\mid g\in G\}$ and $AB=\{gh\mid g\in A,h\in B\}$. By a \emph{generating family} $\mathcal{F}$ on the group $G$ we mean a family $\mathcal{F}$ of subsets of $G$ such that $\mathcal{F}$ contains a nonempty subset of $G$ and for each $A,B\in \mathcal{F}$, the subsets $A\bigcup B$, $AB$ and $A^{-1}$ belongs to $\mathcal{F}$.
\end{definition}
\begin{example}
Let $G$ be a group and let $\mathcal{F}$ be a generating family on $G$. Let $\mathcal{E}_{\mathcal{F}}$ denotes the family of all subsets $E$ of $X\times X$ such that $E\subseteq G(F\times F)$ for some $F\in \mathcal{F}$, where
$$G(F\times F)=\{(ga,gb)\mid g\in G,a,b\in F\}$$
It can be shown that the family $\mathcal{E}_{\mathcal{F}}$ is a coarse structure on $G$ and we call it the coarse structure associated to the generating family $\mathcal{F}$ on $G$ (Proposition 2.4 of \cite{Nicas}). This coarse structure is not connected in general. In fact the coarse structure $\mathcal{E}_{\mathcal{F}}$ is connected if and only if $G=\bigcup_{F\in \mathcal{F}}F$ (Proposition 2.21 of \cite{Nicas}). From now on we assume all generating sets $\mathcal{F}$ with the additional assumption $G=\bigcup_{F\in \mathcal{F}}F$.
\end{example}
Let $G$ be a group and let $\mathcal{F}$ be a generating family on $G$. We denote by $\hat{\mathcal{F}}$ the family of all $A\subseteq G$ such that $A\subseteq B$ for some $B\in \mathcal{F}$. It is easy to see that $\hat{\mathcal{F}}$ is a generating family on $G$ and $\mathcal{E}_{\mathcal{F}}=\mathcal{E}_{\hat{\mathcal{F}}}$. Another thing that we should mention here is that if the coarse structure $\mathcal{E}_{\mathcal{F}}$ is connected then the family of all bounded subsets coincide with the family $\hat{\mathcal{F}}$. Let us end this section with two examples of generating sets on groups (for more examples see \cite{Nicas}).
\begin{example}
Let $G$ be a group and suppose that $\mathcal{F}$ denotes the family of all finite subsets of $G$. Then $\mathcal{F}$ is a generating family on $G$.
\end{example}
\begin{example}
Let $G$ be a (Hausdorff) topological group. Let $\mathcal{K}$ denotes the family of all subsets of $G$ with compact closure. Then $\mathcal{K}$ is a generating family on $G$. We call the associated coarse structure $\mathcal{E}_{\mathcal{K}}$ the \emph{group compact coarse structure} (\cite{Nicas}).
\end{example}
\begin{example}
Let $G$ be a group and let $d$ be a left invariant metric on $G$. Assume that $\mathcal{F}$ denotes the family of all bounded subsets of the metric space $(G,d)$. The family $\mathcal{F}$ is a generating family on $G$.
\end{example}
Let $X$ be a topological space. A coarse structure $\mathcal{E}$ on $X$ is called to be compatible with the topology of $X$ if there exists some $E\in \mathcal{E}$ which is open in the product topology of $X\times X$ and $E$ contains the diagonal. The coarse structure $\mathcal{E}$ is called to be proper if each bounded subset of the coarse space $(X,\mathcal{E})$ has a compact closure. Note that if there exists a compatible and proper coarse structure on a topological space $X$ then $X$ is locally compact. Now let $\mathcal{E}$ be a compatible and proper coarse structure on the topological space $X$. A bounded and continuous function $f:X\rightarrow \mathbb{C}$ is called a \emph{Higson function} if for each $E\in \mathcal{E}$ the restriction of the function $g(x,y)=\mid f(x)-f(y)\mid$ to $E$ vanishes to infinity. If we denote the family of all Higson functions by $C_{h}(X)$ then there exists a compactification $hX$ of $X$ such that $C_{h}(X)=C(hX)$ (\cite{Roe} section 2.3), where $C(hX)$ denotes the family of all continuous functions $f:hX\rightarrow \mathbb{C}$. The compactification $hX$ of $X$ is called the \emph{Higson compactification} of $X$. The boundary $\nu X=hX\setminus X$ is called the \emph{Higson corona} of $X$. For a subset $A$ of $X$ suppose that $\bar{A}$ denotes the closure of $A$ in $hX$. We denote the boundary $\bar{A}\cap \nu X$ by $\nu A$.
\subsection{Asymptotic Resemblance}\label{1.2}
Let $X$ be a nonempty set by an \emph{asymptotic resemblance relation} (an AS.R) on $X$ we mean an equivalence relation $\lambda$ on the family of all subsets of $X$ with the following two additional properties,\\
1) If $A_{1},A_{2},B_{1},B_{2}\subseteq X$ and $A_{1}\lambda B_{1}$ and $A_{2}\lambda B_{2}$ then $(A_{1}\cup A_{2})\lambda (B_{1}\cup B_{2})$.\\
2) Suppose that $A_{1},A_{2},B\subseteq X$ and $A_{1},A_{2}\neq \emptyset$. If $(A_{1}\cup A_{2})\lambda B$ then there exists nonempty subsets $B_{1}$ and $B_{2}$ of $X$ such that $B=B_{1}\cup B_{2}$ and $A_{i}\lambda B_{i}$ for $i\in \{1,2\}$.\\
If $\lambda$ denotes an AS.R on the set $X$ then we call the pair $(X,\lambda)$ an AS.R space. If $A\lambda B$ for two subsets $A$ and $B$ of the AS.R space $(X,\lambda)$, we say $A$ and $B$ are asymptotically alike. One can see \cite{me} for more details about AS.R spaces. Let $Y$ be a nonempty subset of the AS.R space $(X,\lambda)$. For two subsets $A$ and $B$ of $Y$ define $A\lambda_{Y}B$ if $A$ and $B$ are two asymptotically alike subsets of $X$. The relation $\lambda_{Y}$ is an AS.R on $Y$, which we call the subspace AS.R induced on $Y$ from $X$.
\begin{example}
Let $(X,d)$ be a metric space. For two subsets $A$ and $B$ define $A\lambda_{d}B$ if $d_{H}(A,B)<+\infty$, where $d_{H}$ denotes the Hausdorff distance between $A$ and $B$. Then $\lambda_{d}$ is an AS.R on $X$, which we call the metric AS.R associated to the metric $d$ on $X$.
\end{example}
The following example illustrates how the notions of coarse structures and asymptotic resemblance relations are related.
\begin{example}
Let $\mathcal{E}$ be a coarse structure on the set $X$. Suppose that for $E\in \mathcal{E}$ and $x\in X$, $E(x)=\{y\in X\mid (y,x)\in E\}$ and for $A\subseteq X$, $E(A)=\bigcup_{x\in A}E(x)$. For two subsets $A$ and $B$ of $X$ define $A\lambda_{\mathcal{E}}B$ if $A\subseteq E(B)$ and $B\subseteq E(A)$ for some $E\in \mathcal{E}$. The relation $\lambda_{\mathcal{E}}$ is an AS.R on $X$ and we call it the AS.R associated to the coarse structure $\mathcal{E}$ on $X$. It is worth mentioning that two different coarse structures on a set $X$ may have the same associated AS.R (Example 3.1 of \cite{me}).
\end{example}
We call the subset $A$ of the AS.R space $(X,\lambda)$ bounded if $A=\emptyset$ or $A\lambda \{x\}$ for some $x\in X$. If $\mathcal{E}$ is a coarse structure on the set $X$ then $A\subseteq X$ is bounded in the coarse space $(X,\mathcal{E})$ if and only if $A$ is bounded in the AS.R space $(X,\lambda_{\mathcal{E}})$.\\ Suppose that $(X,\lambda)$ and $(Y,\lambda^{\prime})$ are two AS.R spaces. A map $f:X\rightarrow Y$ is called an \emph{AS.R mapping} if the inverse image of each bounded subset of $Y$ is a bounded subset of $X$ and if $A\lambda B$ then $f(A)\lambda^{\prime}f(B)$, for all $A,B\subseteq X$. An AS.R mapping $f:X\rightarrow Y$ is called an \emph{asymptotic equivalence} if there exists an AS.R mapping $\tilde{f}:Y\rightarrow X$ such that $\tilde{f}(f(A))\lambda A$ and $f(\tilde{f}(B))\lambda^{\prime}B$ for all $A\subseteq X$ and $B\subseteq Y$. In this case $\tilde{f}$ is called the asymptotic inverse of $f$ and $X$ and $Y$ are called asymptotically equivalent.
\begin{definition}
We say two subsets $A$ and $B$ of the AS.R space $(X,\lambda)$ are \emph{asymptotically disjoint} if $L_{1}\lambda L_{2}$ for $L_{1}\subseteq A$ and $L_{2}\subseteq B$ then $L_{1}$ and $L_{2}$ are bounded. In other words two subsets of an AS.R space are asymptotically disjoint if they do not have any unbounded asymptotically alike subsets. An AS.R space $(X,\lambda)$ is called \emph{asymptotically normal} if $A$ and $B$ are two asymptotically disjoint subsets of $X$ then $X=X_{1}\cup X_{2}$ such that $X_{1}$ and $X_{2}$ are asymptotically disjoint from $A$ respectively.
\end{definition}
\begin{proposition}\label{normal}
Let $d$ be a metric on the set $X$. Then the AS.R space $(X,\lambda_{d})$ is asymptotically normal.
\end{proposition}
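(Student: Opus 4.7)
The plan is to give a concrete construction of the decomposition $X=X_1\cup X_2$ using the two distance functions $d(\,\cdot\,,A)$ and $d(\,\cdot\,,B)$. First I would reformulate asymptotic disjointness in the metric setting into the following handier form: $A$ and $B$ are asymptotically disjoint in $(X,\lambda_{d})$ if and only if, for every $R>0$, the set $\{a\in A\mid d(a,B)\leq R\}$ is bounded (equivalently, $\{b\in B\mid d(b,A)\leq R\}$ is bounded). One direction is immediate by taking $L_{1}=\{a\in A\mid d(a,B)\leq R\}$ and choosing for each $a$ a witness $b_a\in B$ at distance $\leq R$, so that $L_{1}$ and $L_{2}=\{b_a\}\subseteq B$ have finite Hausdorff distance and thus both must be bounded. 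The converse direction is straightforward since a finite Hausdorff distance between $L_{1}\subseteq A$ and $L_{2}\subseteq B$ forces $L_{1}$ into a set of the above form.

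With this characterization in hand, I would set
\[
X_{1}=\{x\in X\mid d(x,B)\leq d(x,A)\},\qquad X_{2}=\{x\in X\mid d(x,A)\leq d(x,B)\},
\]
which plainly satisfy $X=X_{1}\cup X_{2}$. The remaining task is to show that $X_{1}$ is asymptotically disjoint from $A$ and, by the symmetric argument, that $X_{2}$ is asymptotically disjoint from $B$.

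To verify the first of these, assume $L\subseteq X_{1}$ and $L'\subseteq A$ satisfy $L\,\lambda_{d}\,L'$, so $d_{H}(L,L')\leq R$ for some $R<\infty$. For any $x\in L$ pick $a\in L'\subseteq A$ with $d(x,a)\leq R+1$; since $x\in X_{1}$ we have $d(x,B)\leq d(x,A)\leq R+1$, so choose $b\in B$ with $d(x,b)\leq R+2$. The triangle inequality gives $d(a,b)\leq 2R+3$, hence $a\in\{y\in A\mid d(y,B)\leq 2R+3\}$, which is bounded by the reformulation above. Thus $L'$ is contained in a bounded subset of $A$, and $L$ is contained in the $(R+1)$-neighbourhood of a bounded set, which is again bounded. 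Therefore $L$ and $L'$ are both bounded, proving $X_{1}$ and $A$ are asymptotically disjoint.

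The main obstacle is really only the key triangle-inequality chain in the last step: one has to convert "$x\in X_{1}$ is close to $A$" into "the chosen witness in $A$ is close to $B$," so that the asymptotic disjointness of $A$ and $B$ is actually triggered. Everything else (the covering $X=X_{1}\cup X_{2}$, the symmetric treatment of $X_{2}$ and $B$, and the equivalence between the AS.R definition of asymptotic disjointness and the neighbourhood-boundedness condition) is routine, so the argument should close cleanly.
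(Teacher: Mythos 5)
Your proof is correct. The paper itself gives no argument for this proposition — it simply cites Proposition 4.5 of the reference \cite{me} — and your construction, with $X_{1}=\{x\mid d(x,B)\leq d(x,A)\}$ and $X_{2}=\{x\mid d(x,A)\leq d(x,B)\}$ together with the reformulation of asymptotic disjointness as boundedness of $\{a\in A\mid d(a,B)\leq R\}$ for every $R$, is exactly the standard argument used there. One small quantifier slip worth tidying: as written, your triangle-inequality step starts from $x\in L$ and produces a witness $a\in L'$ lying in the bounded set $\{y\in A\mid d(y,B)\leq 2R+3\}$, which only bounds the set of chosen witnesses rather than all of $L'$. The fix is to run the same chain starting from an arbitrary $a'\in L'$: pick $x\in L$ with $d(a',x)\leq R+1$, note $d(x,B)\leq d(x,A)\leq d(x,a')\leq R+1$, and conclude $d(a',B)\leq 2R+2$, so that all of $L'$ lands in a bounded set; the boundedness of $L$ then follows as you say. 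This is a one-line repair, not a gap in the idea.
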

\begin{proof}
See Proposition 4.5 of \cite{me}.
\end{proof}
Let $\lambda$ be an AS.R on the topological space $X$. We say $\lambda$ is compatible with the topology of $X$ if for each subset $A$ of $X$ there exists an open subset $A\subseteq U$ such that $A\lambda U$ and in addition $\bar{A}\lambda A$. The AS.R $\lambda$ is called to be proper if every bounded subset has compact closure. It can be shown that each AS.R associated to a compatible and proper coarse structure on a topological space is compatible and proper (Proposition 4.2 of \cite{me}). Let $\lambda$ be a compatible and proper AS.R on the normal topological space $X$. Assume that $(X,\lambda)$ is asymptotically normal. Define $A\delta B$ if $A$ and $B$ are not asymptotically disjoint or $\bar{A}\cap \bar{B}\neq \emptyset$. The relation $\delta$ defines a compatible proximity on $X$ and we call the associated Smirnov compactification the asymptotic compactification of $X$ (for proximity spaces and their Smirnov compactifications see \cite{Nai}). It can be shown that if $\mathcal{E}$ is a compatible and proper coarse structure on the normal topological space $X$, such that $\lambda_{\mathcal{E}}$ is asymptotically normal then the Higson compactification of $X$ and the asymptotic compactification of $X$ are equal up to a homeomorphism (\cite{me}).
\section{asymptotic resemblance relations on groups}\label{sec2}
\begin{definition}
Let $\mathcal{F}$ be a generating family on a group $G$. Define $A\lambda_{\mathcal{F}} B$ if there exists some $H\in \mathcal{F}$ such that $A\subseteq BH$ and $B\subseteq AH$.
\end{definition}
\begin{proposition}
Let $\mathcal{F}$ be a generating family on the group $G$. Then $\lambda_{\mathcal{F}}$ is the AS.R associated to the coarse structure $\mathcal{E}_{\mathcal{F}}$ on $G$.
\end{proposition}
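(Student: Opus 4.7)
The plan is to unwrap both definitions and translate between the ``ball'' $E(B)$ in the coarse-structure picture and the set product $BH$ in the generating-family picture. The key computation I would carry out first is that for any $F \in \mathcal{F}$, the basic entourage $E = G(F \times F)$ satisfies
\[
 E(\{x\}) \;=\; \{y \in G : (y,x)\in G(F\times F)\} \;=\; x\,F^{-1}F,
\]
because $(y,x)=(ga,gb)$ forces $g=xb^{-1}$ and hence $y=xb^{-1}a\in xF^{-1}F$, and conversely any such $y$ arises this way. Consequently $E(B)=BF^{-1}F$, and since $F^{-1}F$ again lies in $\mathcal{F}$ (generating families being closed under inversion and products), sets of this form are the natural candidates for the parameter $H$ appearing in $\lambda_{\mathcal{F}}$.

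For the inclusion $\lambda_{\mathcal{F}}\subseteq\lambda_{\mathcal{E}_{\mathcal{F}}}$, I would start from $A\subseteq BH$ and $B\subseteq AH$ with $H\in\mathcal{F}$. Using the standing connectedness assumption $G=\bigcup_{F\in\mathcal{F}}F$ I would pick some $F_{0}\in\mathcal{F}$ with $e\in F_{0}$ and set $F:=F_{0}\cup H\in\mathcal{F}$. Then $H=\{e\}\cdot H\subseteq F^{-1}F$, so the entourage $E:=G(F\times F)\in\mathcal{E}_{\mathcal{F}}$ satisfies $A\subseteq BH\subseteq BF^{-1}F=E(B)$ and symmetrically $B\subseteq E(A)$, giving $A\lambda_{\mathcal{E}_{\mathcal{F}}}B$.

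For the reverse inclusion I would take $E\in\mathcal{E}_{\mathcal{F}}$ with $A\subseteq E(B)$ and $B\subseteq E(A)$. By definition of $\mathcal{E}_{\mathcal{F}}$ there is $F\in\mathcal{F}$ with $E\subseteq G(F\times F)$; since enlarging $E$ only enlarges $E(B)$ and $E(A)$, I may replace $E$ by $G(F\times F)$ outright. Setting $H:=F^{-1}F\in\mathcal{F}$ and invoking the formula $E(B)=BH$ (and $E(A)=AH$) from the first paragraph immediately gives $A\subseteq BH$ and $B\subseteq AH$, hence $A\lambda_{\mathcal{F}}B$.

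No step is genuinely difficult; the only mildly delicate point is the forward direction, where the witness $H\in\mathcal{F}$ supplied by $\lambda_{\mathcal{F}}$ is not a priori of the form $F^{-1}F$. The connectedness assumption on the generating family is exactly what lets one absorb $H$ into a larger $F\in\mathcal{F}$ with $e\in F$ so that $H\subseteq F^{-1}F$, and that is the single place in the argument where the blanket hypothesis on $\mathcal{F}$ is actually invoked.
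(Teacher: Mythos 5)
Your proof is correct and follows essentially the same route as the paper: the identity $E(B)=BF^{-1}F$ for $E=G(F\times F)$ is exactly the paper's pointwise computation $b^{-1}a=h_2^{-1}h_1\in H^{-1}H$ packaged as a lemma, and your enlargement $F=F_0\cup H$ with $e\in F_0$ plays the same role as the paper's $K=H\cup HH^{-1}$ in the forward direction. No substantive difference.
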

\begin{proof}
Assume that $\lambda$ denotes the AS.R associated to $\mathcal{E}_{\mathcal{F}}$. For two subsets $A$ and $B$ of $X$ if $A\lambda B$, then there exists some $H\in \mathcal{F}$ and $E\subseteq G(H\times H)$ such that $A\subseteq E(B)$ and $B\subseteq E(A)$. Let $K=H^{-1}H$. So $K\in \mathcal{F}$. If $a\in A$, there is some $b\in B$ such that $(a,b)\in E$. Thus there are $h_{1},h_{2}\in H$ and $g\in G$ such that $(a,b)=(gh_{1},gh_{2})$. So $b^{-1}a=h_{2}^{-1}h_{1}\in K$ and it shows that $a\in BK$ and therefore $A\subseteq BK$. Similarly one can show that $B\subseteq AK$ and this leads to $A\lambda_{\mathcal{F}}B$. Now suppose that $A\lambda_{\mathcal{F}}B$. There exists some $H\in \mathcal{F}$ such that $A\subseteq BH$ and $B\subseteq AH$. Let $K=H\cup HH^{-1}$ and $E=G(K\times K)$. Clearly $E\in \mathcal{F}$. If $a\in A$, then there exists some $b\in B$ and $h\in H$ such that $a=bh$. Since $e\in HH^{-1}\subseteq K$, we have $(a,b)=b(h,e)\in E$ ($e$ denotes the neutral element of $G$). So $A\subseteq E(B)$ and similarly it can be shown that $B\subseteq E(A)$. Therefore $A\lambda B$.
\end{proof}
\begin{proposition}\label{disjoint}
Let $\mathcal{F}$ be a generating family on the group G. Then two subsets $A$ and $B$ of $G$ are asymptotically disjoint in the AS.R space $(G,\lambda_{\mathcal{F}})$ if and only if $A\cap BH$ is bounded for each $H\in \mathcal{F}$.
\end{proposition}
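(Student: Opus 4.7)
The plan is to unwind the definition of asymptotic disjointness directly against the definition of $\lambda_{\mathcal{F}}$, keeping in mind that a subset of $G$ is bounded precisely when it is contained in some element of $\mathcal{F}$ (because $\mathcal{E}_{\mathcal{F}}$ is connected, so the bounded subsets coincide with $\hat{\mathcal{F}}$).

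For the forward implication, I would fix $H\in\mathcal{F}$ and produce an explicit candidate pair of asymptotically alike subsets of $A$ and $B$. Namely, set $L_{1}=A\cap BH$ and
\[
L_{2}=\{b\in B\mid bh\in L_{1}\text{ for some }h\in H\}.
\]
Then $L_{1}\subseteq L_{2}H$ by construction, and for every $b\in L_{2}$ we have $b=(bh)h^{-1}\in L_{1}H^{-1}$, so $L_{2}\subseteq L_{1}H^{-1}$. Taking $K=H\cup H^{-1}\in\mathcal{F}$ gives $L_{1}\subseteq L_{2}K$ and $L_{2}\subseteq L_{1}K$, i.e.\ $L_{1}\lambda_{\mathcal{F}}L_{2}$. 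Asymptotic disjointness of $A$ and $B$ then forces $L_{1}=A\cap BH$ to be bounded, as required.

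For the backward implication, I would start with arbitrary $L_{1}\subseteq A$ and $L_{2}\subseteq B$ with $L_{1}\lambda_{\mathcal{F}}L_{2}$, witnessed by some $H\in\mathcal{F}$, so $L_{1}\subseteq L_{2}H$ and $L_{2}\subseteq L_{1}H$. The first inclusion gives $L_{1}\subseteq A\cap BH$, which is bounded by hypothesis. For $L_{2}$, the inclusion $L_{2}\subseteq B\cap AH$ is not directly covered by the hypothesis, so the key observation is the little symmetry trick: if $b\in B\cap AH$ then $b=ah$ with $a\in A$ and $h\in H$, hence $a\in A\cap BH^{-1}$ and $b\in(A\cap BH^{-1})H$. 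Thus
\[
B\cap AH\subseteq (A\cap BH^{-1})H.
\]
Now $H^{-1}\in\mathcal{F}$, so by hypothesis $A\cap BH^{-1}$ is bounded, say contained in some $F\in\mathcal{F}$; then $(A\cap BH^{-1})H\subseteq FH\in\mathcal{F}$, which is bounded. Hence $L_{2}$ is bounded, completing the verification of asymptotic disjointness.

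The only point requiring a little care is the backward direction, where the hypothesis is phrased asymmetrically in $A$ and $B$, whereas asymptotic disjointness is symmetric; the small computation $B\cap AH\subseteq(A\cap BH^{-1})H$ together with the closure of $\mathcal{F}$ under inversion and products is exactly what absorbs that asymmetry, and this is the step I expect to be the main (albeit mild) obstacle.
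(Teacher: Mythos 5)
Your proof is correct and follows essentially the same route as the paper: the forward direction uses the same witnessing pair (your $L_{2}$ is exactly the paper's $B\cap AH^{-1}$), and the backward direction is the paper's contrapositive argument run directly. The only difference is that your symmetry trick for bounding $L_{2}$ is more elaborate than necessary---since $L_{2}\subseteq L_{1}H$ and $L_{1}$ is already known to be bounded, $L_{2}$ is bounded because $\hat{\mathcal{F}}$ is closed under right multiplication by elements of $\mathcal{F}$.
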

\begin{proof}
Assume that $A$ and $B$ are two asymptotically disjoint subsets of $(G,\lambda_{\mathcal{F}})$. Suppose that $L_{1}=A\cap BH$ is unbounded for some $H\in \mathcal{F}$. Let $L_{2}=B\cap AH^{-1}$ and $K=H\cup H^{-1}$. It is straightforward to show that $L_{1}\subseteq L_{2}K$ and $L_{2}\subseteq L_{1}K$, thus $L_{1}\lambda_{\mathcal{F}}L_{2}$. It shows that $A$ and $B$ are not asymptotically disjoint, a contradiction. To prove the converse suppose that $A$ and $B$ are two subsets of $G$ and they are not asymptotically disjoint. So there are unbounded subsets $L_{1}\subseteq A$ and $L_{2}\subseteq B$ and some $H\in \mathcal{F}$ such that $L_{1}\subseteq L_{2}H$ and $L_{2}\subseteq L_{1}H$. Thus $L_{1}\subseteq A\cap BH$, which shows that $A\cap BH$ is unbounded.
\end{proof}
Since in this paper we consider each coarse structure to be connected so as we mentioned before if $\mathcal{F}$ is a generating family on the group $G$, then the family of all bounded subsets of $(G,\lambda_{\mathcal{F}})$ coincide with $\hat{\mathcal{F}}$. Thus we have the following corollary.
\begin{corollary}\label{dis}
Let $\mathcal{F}$ be a generating family on the group $G$. Two subsets $A$ and $B$ of $G$ are asymptotically disjoint if and only if $A\cap BH\in \hat{\mathcal{F}}$, for all $H\in \mathcal{F}$.
\end{corollary}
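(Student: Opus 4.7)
The plan is to observe that this corollary is an immediate translation of Proposition \ref{disjoint} once we identify the notion of ``bounded subset'' of $(G,\lambda_{\mathcal{F}})$ with membership in $\hat{\mathcal{F}}$. Since, as noted just before the corollary, we have assumed throughout that the coarse structure $\mathcal{E}_{\mathcal{F}}$ is connected (equivalently, $G=\bigcup_{F\in\mathcal{F}}F$), the family of all bounded subsets of the coarse space $(G,\mathcal{E}_{\mathcal{F}})$ coincides with $\hat{\mathcal{F}}$. Combined with the fact, recorded in \S\ref{1.2}, that $A\subseteq G$ is bounded in $(G,\mathcal{E}_{\mathcal{F}})$ if and only if it is bounded in the associated AS.R space $(G,\lambda_{\mathcal{F}})$, this yields the equivalence ``bounded $\Longleftrightarrow$ an element of $\hat{\mathcal{F}}$'' in the AS.R sense.

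With this identification in hand, I would simply apply Proposition \ref{disjoint}: the condition ``$A\cap BH$ is bounded for each $H\in\mathcal{F}$'' becomes verbatim ``$A\cap BH\in\hat{\mathcal{F}}$ for each $H\in\mathcal{F}$''. Both implications of the corollary follow with no further work, since Proposition \ref{disjoint} already established the characterisation of asymptotic disjointness in terms of boundedness of the intersections $A\cap BH$.

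There is essentially no obstacle to overcome: the content of the corollary is entirely contained in Proposition \ref{disjoint} together with the standing connectedness hypothesis. The proof is, accordingly, a one-line substitution rewriting the word ``bounded'' in Proposition \ref{disjoint} as ``belongs to $\hat{\mathcal{F}}$''.
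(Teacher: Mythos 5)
Your proof is correct and follows exactly the paper's route: the corollary is stated there as an immediate consequence of Proposition \ref{disjoint} together with the observation (made in the sentence preceding the corollary) that, under the standing connectedness assumption, the bounded subsets of $(G,\lambda_{\mathcal{F}})$ are precisely the elements of $\hat{\mathcal{F}}$. Nothing further is needed.
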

\begin{corollary}
Let $G$ be a topological group. Then two subsets $A$ and $B$ of $G$ are asymptotically disjoint with respect to the AS.R $\lambda_{\mathcal{K}}$ if and only if for all compact subset $K$ of $G$, $A\cap BK$ has compact closure ($\mathcal{K}$ denotes the family of all compact subsets of $G$).
\end{corollary}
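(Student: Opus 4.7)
My plan is to deduce this from Corollary \ref{dis} applied to the generating family $\mathcal{K}$. By that corollary, $A$ and $B$ are asymptotically disjoint with respect to $\lambda_{\mathcal{K}}$ if and only if $A\cap BH\in \hat{\mathcal{K}}$ for every $H\in \mathcal{K}$. The remaining work is to translate both the test family and the membership condition into purely topological language.

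First I would identify $\hat{\mathcal{K}}$. Since $\mathcal{K}$ is defined as the family of all subsets of $G$ with compact closure, and any subset of a set with compact closure itself has compact closure, the family $\mathcal{K}$ is already closed under passage to subsets, so $\hat{\mathcal{K}}=\mathcal{K}$. Therefore the condition $A\cap BH\in \hat{\mathcal{K}}$ says exactly that $A\cap BH$ has compact closure.

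Second, I would verify that quantifying $H$ over all of $\mathcal{K}$ is equivalent to quantifying only over compact subsets $K$ of $G$. One direction is immediate because every compact set belongs to $\mathcal{K}$. For the other direction, given an arbitrary $H\in \mathcal{K}$, I would set $K=\overline{H}$; then $K$ is compact, $BH\subseteq BK$, and hence $A\cap BH\subseteq A\cap BK$, so if the latter has compact closure then so does its subset.

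Since everything above is a direct unpacking of definitions after invoking Corollary \ref{dis}, there is no genuine obstacle. The only minor subtlety is the substitution of $H$ by $\overline{H}$ in order to replace the family of relatively compact test sets by the family of genuinely compact ones, and this is handled by the inclusion $BH\subseteq B\overline{H}$.
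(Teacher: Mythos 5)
Your proposal is correct and follows exactly the route the paper takes: the paper's proof is simply ``It is evident from Corollary \ref{dis}.'' Your write-up just supplies the two routine verifications left implicit there --- that $\hat{\mathcal{K}}=\mathcal{K}$ for the family of relatively compact sets, and that testing against compact sets $K$ is equivalent to testing against relatively compact sets $H$ via the inclusion $A\cap BH\subseteq A\cap B\overline{H}$ --- both of which are handled correctly.
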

\begin{proof}
It is evident from Corollary \ref{dis}.
\end{proof}
\begin{proposition}\label{compatible}
Let $\mathcal{F}$ be a generating family on the topological group $G$. Then the coarse structure $\mathcal{E}_{\mathcal{F}}$ is compatible with the topology of $G$ if and only if there exists an open set $U$ containing the identity such that $U\in \hat{\mathcal{F}}$.
\end{proposition}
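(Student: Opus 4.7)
The plan is to prove both implications by translating between open sets in the product topology on $G\times G$ and the building blocks $G(F\times F)$ of the coarse structure $\mathcal{E}_{\mathcal{F}}$, exploiting that the map $\phi:G\times G\to G$ given by $\phi(x,y)=x^{-1}y$ is continuous in any topological group.

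For the ``if'' direction, I assume an open $U\ni e$ with $U\in\hat{\mathcal{F}}$ and write down an explicit witness for compatibility. Let $V=\phi^{-1}(U)=\{(x,y)\in G\times G : x^{-1}y\in U\}$. Continuity of $\phi$ makes $V$ open, and $e\in U$ gives $\Delta\subseteq V$. To see $V\in\mathcal{E}_{\mathcal{F}}$, pick any $F\in\mathcal{F}$ with $U\subseteq F$ (available since $U\in\hat{\mathcal{F}}$) and any $F_0\in\mathcal{F}$ containing $e$ (available since $\mathcal{E}_{\mathcal{F}}$ is connected, so $G=\bigcup_{F\in\mathcal{F}}F$). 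Then $F':=F\cup F_0\in\mathcal{F}$ contains both $e$ and $U$, and for any $(x,y)\in V$ we have $y=xu$ with $u\in U\subseteq F'$, so $(x,y)=(x\cdot e,x\cdot u)\in G(F'\times F')$. Hence $V\subseteq G(F'\times F')$, proving $V\in\mathcal{E}_{\mathcal{F}}$.

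For the ``only if'' direction, suppose $E\in\mathcal{E}_{\mathcal{F}}$ is open in $G\times G$ and contains $\Delta$, and fix $F\in\mathcal{F}$ with $E\subseteq G(F\times F)$. Because $(e,e)\in\Delta\subseteq E$ and $E$ is open, there are open $U_1,U_2\subseteq G$ with $(e,e)\in U_1\times U_2\subseteq E$. Set $U=U_1\cap U_2$; this is an open neighbourhood of $e$. For each $u\in U$ we have $(e,u)\in E\subseteq G(F\times F)$, so there exist $g\in G$ and $f_1,f_2\in F$ with $(e,u)=(gf_1,gf_2)$. This forces $g=f_1^{-1}$ and $u=f_1^{-1}f_2\in F^{-1}F$. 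Since $\mathcal{F}$ is closed under inversion and multiplication, $F^{-1}F\in\mathcal{F}$, and thus $U\subseteq F^{-1}F$ shows $U\in\hat{\mathcal{F}}$, as required.

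I do not anticipate a serious obstacle: the argument is essentially bookkeeping between the axioms of a generating family and the topology on $G\times G$. The only point that needs mild care is the ``if'' direction, where one must remember that $\mathcal{E}_{\mathcal{F}}$ has been standing-assumed to be connected so that $e$ is covered by some element of $\mathcal{F}$; without this, the set $F'$ witnessing $V\in\mathcal{E}_{\mathcal{F}}$ could fail to exist.
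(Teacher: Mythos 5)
Your proof is correct and takes essentially the same route as the paper's: the paper extracts the section $E(e)$ of an open entourage for the ``only if'' direction (your $U_1\times U_2$ argument is just a slightly more explicit version of this), and for the ``if'' direction it uses $G(U\times U)$ as the witnessing open entourage, which differs from your $\phi^{-1}(U)=\{(x,y)\mid x^{-1}y\in U\}$ only up to replacing $U$ by $U^{-1}U$. Your closing worry about needing $e$ to lie in some member of $\mathcal{F}$ is handled either by the standing connectedness assumption, as you note, or simply by taking $AA^{-1}\in\mathcal{F}$ for any nonempty $A\in\mathcal{F}$.
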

\begin{proof}
Let $e$ denotes the identity of $G$. Suppose that $\mathcal{E}_{\mathcal{F}}$ is compatible with the topology of $G$ and let $E$ be an open entourage that contains the diagonal. Then $e\in E(e)$ is an open set and $E(e)\in \hat{\mathcal{F}}$. To prove the converse suppose that $U$ is an open neighbourhood of the identity and $U\in \hat{\mathcal{F}}$. Then clearly $E=G(U\times U)$ is an open entourage containing the diagonal.
\end{proof}
\begin{proposition}\label{proper}
Let $\mathcal{F}$ be a generating family on the topological group $G$. Then the coarse structure $\mathcal{E}_{\mathcal{F}}$ is proper if and only if each element of $\mathcal{F}$ has compact closure.
\end{proposition}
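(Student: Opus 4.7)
The plan is to exploit the identification, recalled just before Proposition \ref{compatible}, between the bounded subsets of $(G,\lambda_{\mathcal{F}})$ and the family $\hat{\mathcal{F}}$ of subsets of $G$ contained in some element of $\mathcal{F}$. Once this identification is in hand, both directions become essentially bookkeeping.

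For the forward direction, I would fix an arbitrary $F\in\mathcal{F}$ and observe that $F$ is bounded in $(G,\mathcal{E}_{\mathcal{F}})$ because $F\subseteq F$, so $F\in\hat{\mathcal{F}}$. Since $\mathcal{E}_{\mathcal{F}}$ is assumed proper, every bounded set has compact closure, so $\bar{F}$ is compact. This is immediate from the definition of properness and requires no further work.

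For the converse, I would start with an arbitrary bounded subset $A$ of $(G,\mathcal{E}_{\mathcal{F}})$ and use boundedness to pick $F\in\mathcal{F}$ with $A\subseteq F$. Then $\bar{A}\subseteq\bar{F}$ (closure is monotone), and by hypothesis $\bar{F}$ is compact. Because $\bar{A}$ is closed in $G$ and sits inside the compact set $\bar{F}$, it is a closed subset of a compact space and therefore itself compact. This verifies the definition of properness for $\mathcal{E}_{\mathcal{F}}$.

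There is no genuine obstacle: the substantive work has already been done in establishing that connectedness of $\mathcal{E}_{\mathcal{F}}$ forces the bounded sets to coincide with $\hat{\mathcal{F}}$, and the remaining content is the elementary topological fact that closed subsets of compact sets are compact. The only care needed is to remember to write $\bar{A}\subseteq\bar{F}$ rather than the weaker $A\subseteq\bar{F}$, so that the closed-in-compact argument applies.
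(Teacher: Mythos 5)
Your proof is correct and is precisely the routine argument the paper has in mind (the paper's own proof is just the remark ``It is straightforward''): both directions reduce to the stated identification of the bounded sets of $(G,\mathcal{E}_{\mathcal{F}})$ with $\hat{\mathcal{F}}$, plus the fact that a closed subset of a compact set is compact. Nothing further is needed.
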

\begin{proof}
It is straightforward.
\end{proof}
On a topological group $G$, we call the AS.R $\lambda_{\mathcal{K}}$ the \emph{group compact AS.R} on $G$ ($\mathcal{K}$ denotes the family of all subsets of $G$ with compact closure as mentioned before). By Proposition \ref{proper} it is evident that the group compact AS.R is proper. The following corollary is a direct consequence of Proposition \ref{compatible}.
\begin{corollary}\label{com}
The group compact AS.R is compatible with the topology of the topological group $G$ if and only if $G$ is locally compact.
\end{corollary}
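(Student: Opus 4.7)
The plan is to reduce the statement to Proposition \ref{compatible} applied with $\mathcal{F}=\mathcal{K}$. The first observation I would record is that $\hat{\mathcal{K}}=\mathcal{K}$: if $A\subseteq B$ and $\bar{B}$ is compact, then $\bar{A}$ is a closed subset of $\bar{B}$, and hence itself compact. By Proposition \ref{compatible} the coarse structure $\mathcal{E}_\mathcal{K}$ is therefore compatible with the topology of $G$ if and only if there is an open neighbourhood of the identity $e$ that lies in $\mathcal{K}$, i.e.\ has compact closure. Since left translation by any group element is a self-homeomorphism of $G$, the existence of such a neighbourhood of $e$ is equivalent to the existence of such a neighbourhood of every point, which is precisely the local compactness of $G$.

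It then remains to bridge between compatibility of the coarse structure $\mathcal{E}_\mathcal{K}$ and compatibility of its associated AS.R $\lambda_\mathcal{K}$. For the forward direction I would simply invoke Proposition 4.2 of \cite{me}, noting that $\mathcal{E}_\mathcal{K}$ is automatically proper by Proposition \ref{proper}; hence if $\mathcal{E}_\mathcal{K}$ is compatible then so is $\lambda_\mathcal{K}$. For the converse I would specialise the AS.R compatibility to the singleton $A=\{e\}$: this supplies an open set $U$ with $e\in U$ and $\{e\}\lambda_\mathcal{K} U$, so $U\subseteq\{e\}H=H$ for some $H\in\mathcal{K}$, and then $\bar{U}\subseteq\bar{H}$ is compact, exhibiting an open neighbourhood of $e$ with compact closure and so forcing $G$ to be locally compact.

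Combining the two equivalences gives the corollary. The proof is essentially bookkeeping, and the only mild subtlety — the main obstacle, to the extent there is one — is that the definitions of ``compatible with the topology'' for coarse structures and for AS.R spaces are formally different, so one cannot quote Proposition \ref{compatible} directly but must check separately that both directions of the bridge go through for $\lambda_\mathcal{K}$; everything else reduces to the elementary fact that closed subsets of compacts are compact, together with the homogeneity of topological groups.
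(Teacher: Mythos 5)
Your proof is correct and follows essentially the same route as the paper, which simply declares the corollary a direct consequence of Proposition \ref{compatible}. The bridge you build between compatibility of the coarse structure $\mathcal{E}_{\mathcal{K}}$ and compatibility of the AS.R $\lambda_{\mathcal{K}}$ (via Proposition 4.2 of the asymptotic resemblance paper in one direction, and the singleton $\{e\}$ in the other) is precisely the point the paper leaves implicit, and both directions of that bridge are sound.
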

Let $\lambda_{\mathcal{F}}$ denotes the AS.R on the topological group $G$ associated to the generating set $\mathcal{F}$ on $G$. For two subsets $A$ and $B$ of $G$ define $A\delta_{\mathcal{F}}B$ if $\bar{A}\cap \bar{B}\neq \emptyset$ or there exists some $K\in \mathcal{F}$ such that $A\cap BK$ is not in $\hat{F}$ (i.e they are not asymptotically disjoint). If $\lambda_{\mathcal{F}}$ is an asymptotically normal, proper and compatible AS.R on the normal topological group $G$ then $\delta_{\mathcal{F}}$ is a compatible proximity on $G$. Thus the Smirnov compactification of the proximity space $(G,\delta_{\mathcal{F}})$ is homeomorphic with the Higson compactification of $(G,\mathcal{E}_{\mathcal{F}})$.\\
Unfortunately the AS.R $\lambda_{\mathcal{F}}$ fails to be asymptotically normal in general. In the following we give an example of a locally compact normal topological group such that the group compact AS.R is not asymptotically normal.
\begin{example}
Suppose the additive group $\mathbb{R}$ with the discrete topology. Note that in this topological group the family $\mathcal{K}$ is equal to the family of all finite subsets of $\mathbb{R}$. We claim that AS.R $\lambda_{\mathcal{K}}$ on this topological group is not asymptotically normal. To prove this it suffices two show that there are two subsets $A$ and $B$ of $\mathbb{R}$ such that they are asymptotically disjoint but for each asymptotically disjoint subset $X$ from $A$, the subset $\mathbb{R}\setminus X$ is not asymptotically disjoint from $B$. Let $A=[0,+\infty)$ and $B=\{-n\mid n\in \mathbb{N}\}$. Two subsets $A$ and $B$ of $\mathbb{R}$ are asymptotically disjoint. Suppose that $X\subseteq \mathbb{R}$ is asymptotically disjoint from $A$. So for each $n\in \mathbb{N}$ the set $A_{n}=\{\epsilon >0\mid -n+\epsilon \in X\}$ is finite. So $Y=\bigcup_{n\in \mathbb{N}}A_{n}$ is countable. Thus there exists some $\epsilon >0$ such that for each $n\in \mathbb{N}$, $-n+\epsilon \notin X$. This means $\mathbb{R}\setminus X$ is not asymptotically disjoint from $B$.
\end{example}
Let us recall that a Hausdorff topological space $X$ is called \emph{hemicompact} if there exists a sequence $K_{1}\subseteq K_{2}\subseteq ...$ of compact subsets of $X$ such that $X=\bigcup_{i\in \mathbb{N}} K_{i}$ and each compact subset of $X$ contains in $K_{i}$ for some $i\in \mathbb{N}$. It can be shown that the group compact coarse structure on a topological group $G$ coincides with the metric coarse structure for some left invariant metric on $G$ if and only if $G$ is hemicompact (It is a straightforward consequence of Theorem 2.17 of \cite{Nicas}). Since each metrizable AS.R is asymptotically normal (Proposition \ref{normal}) we have the following result.
\begin{proposition}
Let $G$ be a hemicompact topological group. Then the group compact AS.R is asymptotically normal.
\end{proposition}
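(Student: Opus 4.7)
The plan is to reduce the statement to the metric case, where asymptotic normality has already been recorded in Proposition \ref{normal}. The author remarks immediately before the statement that hemicompactness of $G$ is equivalent to the group compact coarse structure $\mathcal{E}_{\mathcal{K}}$ being the metric coarse structure of some left-invariant metric $d$ on $G$ (a consequence of Theorem 2.17 of \cite{Nicas}). I would take this equivalence as the starting point, choosing such a left-invariant metric $d$ on $G$ so that $\mathcal{E}_{\mathcal{K}} = \mathcal{E}_{d}$.

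Next, I would pass from the equality of coarse structures to the equality of their associated AS.R relations. Since the construction $\mathcal{E} \mapsto \lambda_{\mathcal{E}}$ depends on nothing but the coarse structure, one has $\lambda_{\mathcal{K}} = \lambda_{\mathcal{E}_{\mathcal{K}}} = \lambda_{\mathcal{E}_{d}}$. It then remains to recognize that $\lambda_{\mathcal{E}_{d}}$ agrees with the metric AS.R $\lambda_{d}$ defined at the beginning of \S\ref{1.2}; this is a one-line unpacking, since $A \subseteq E(B)$ and $B \subseteq E(A)$ for some $E \in \mathcal{E}_{d}$ is literally the statement that $A$ and $B$ lie within finite Hausdorff $d$-distance.

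With $\lambda_{\mathcal{K}} = \lambda_{d}$ in hand, the conclusion is a direct application of Proposition \ref{normal} to the metric space $(G,d)$: that proposition asserts that $(G,\lambda_{d})$ is asymptotically normal, hence so is $(G,\lambda_{\mathcal{K}})$.

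Because the substantive content is absorbed by the two cited results (Theorem 2.17 of \cite{Nicas} and Proposition \ref{normal}), there is no real internal obstacle in the present proof. The only point one must be careful about is invoking the Nicas theorem correctly to produce the left-invariant metric $d$; once $d$ exists, the remainder is a transparent chain of identifications between AS.R relations.
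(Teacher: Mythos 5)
Your proposal is correct and follows exactly the route the paper takes: the remarks immediately preceding the statement invoke Theorem 2.17 of \cite{Nicas} to identify $\mathcal{E}_{\mathcal{K}}$ with a left-invariant metric coarse structure and then apply Proposition \ref{normal} to conclude asymptotic normality. Your explicit identification of $\lambda_{\mathcal{E}_{d}}$ with $\lambda_{d}$ is a small but worthwhile step that the paper leaves implicit.
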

\section{Asymptotic dimensiongrad}\label{sec3}
Suppose that $(X,d)$ is a metric space and $r>0$. Let us recall that a $r$-chain from $x\in X$ to $y\in X$ is a finite sequence $x=x_{0},...,x_{n}=y$ such that $d(x_{i},x_{i+1})\leq r$ for all $i\in\{0,...,n-1\}$.
\begin{definition}\label{chain1}
Let $(X,\mathcal{E})$ be a coarse space and $E\in \mathcal{E}$. A $E$-chain in $(X,\mathcal{E})$ from $a\in X$ to $b\in X$ is a finite sequence $a=x_{0},...,b=x_{n}$ such that $(x_{i},x_{i+1})\in E$, for all $i\in\{0,...,n-1\}$.
\end{definition}
\begin{definition}\label{chian2}
Suppose that $G$ is a group and $\mathcal{F}$ is a generating family on $G$. Let $X\subseteq G$ be a nonempty set. For $K\in \mathcal{F}$ by a $K$-chain in $X$ from $x\in X$ to $y\in X$ we mean the finite sequence $x=g_{0},...,g_{n}=y$, such that for each $i\in \{1,...,n\}$, $g_{i}\in X$ and $g_{i}\in g_{i-1}K$.
\end{definition}
The following proposition is a direct consequence of Definition \ref{chain1} and Definition \ref{chian2}.
\begin{proposition}
Let $\mathcal{F}$ be a generating family on the group $G$. Then $a=g_{0},...,g_{n}=b$ is a $E$-chain in $(X,\mathcal{E}_{\mathcal{F}})$ for some $E\in \mathcal{E}_{\mathcal{F}}$ if and only if it is a $K$-chain for some $K\in \mathcal{F}$.
\end{proposition}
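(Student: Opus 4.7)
The plan is to unwind the definitions and exhibit, for each direction, the required witness by a direct algebraic translation between the entourage form $G(H\times H)$ and left-multiplication by elements of $\mathcal{F}$.

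For the forward implication, I would start with an $E$-chain $a=g_0,\dots,g_n=b$ and use that $E\in\mathcal{E}_{\mathcal{F}}$ means $E\subseteq G(H\times H)$ for some $H\in\mathcal{F}$. For each consecutive pair I would write $(g_i,g_{i+1})=(k_i h_i, k_i h_i')$ with $k_i\in G$ and $h_i,h_i'\in H$. Solving gives $g_{i+1}=g_i\cdot h_i^{-1}h_i'$, so $g_{i+1}\in g_i K$ where $K=H^{-1}H$. Since $\mathcal{F}$ is closed under products and inverses, $K\in\mathcal{F}$, and the same $K$ works for all $i$. Thus the sequence is a $K$-chain.

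For the reverse implication, I would start with a $K$-chain and augment $K$ slightly so that the entourage $G(\cdot\times\cdot)$ has an ``identity coordinate.'' Concretely, using the standing connectedness assumption $G=\bigcup_{F\in\mathcal{F}}F$, pick $F_0\in\mathcal{F}$ with $e\in F_0$ and set $H=K\cup F_0\in\mathcal{F}$; then $e\in H$ and $K\subseteq H$. For each $i$, writing $g_{i+1}=g_i k_i$ with $k_i\in K$, I get $(g_i,g_{i+1})=(g_i\cdot e, g_i\cdot k_i)\in G(H\times H)$. Hence taking $E=G(H\times H)\in\mathcal{E}_{\mathcal{F}}$ realizes the sequence as an $E$-chain.

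Neither direction is hard; the only mild subtlety is the reverse direction, where one must enlarge $K$ to contain the identity in order to fit $(g_i,g_{i+1})$ into an entourage of the form $G(H\times H)$. This is exactly the place where the connectedness assumption made at the start of \S\ref{1.1} (and restated for generating families) is used. With that small adjustment in place, the equivalence follows immediately from Definitions \ref{chain1} and \ref{chian2}, which matches the author's remark that the proposition is a \emph{direct consequence} of those definitions.
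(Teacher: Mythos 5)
Your proof is correct and is exactly the unwinding of Definitions \ref{chain1} and \ref{chian2} that the paper leaves implicit when it calls the proposition a direct consequence of those definitions; the forward direction via $K=H^{-1}H$ matches the computation the author carries out in the proof that $\lambda_{\mathcal{F}}$ is the AS.R associated to $\mathcal{E}_{\mathcal{F}}$. Your way of forcing $e$ into the witness set in the reverse direction (adjoining some $F_{0}\in\mathcal{F}$ containing $e$, via connectedness) is a harmless variant of the author's device of using $H\cup HH^{-1}$, which contains $e$ whenever $H$ is nonempty.
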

\begin{proposition}\label{metcut}
Suppose that $d$ is a left invariant metric on the group $G$. Let $\mathcal{F}$ denote the family of all bounded subsets of $(G,d)$ and let $X$ be a nonempty subset of $G$. Then the sequence $x=g_{0},...,g_{n}=y$ is a $r$-chain in $X$ for some $r>0$ if and only if there exists some $K\in \mathcal{F}$ such that $x=g_{0},...,g_{n}=y$ is a $K$-chain between $x$ and $y$ in $X$.
\end{proposition}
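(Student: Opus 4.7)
The plan is to move back and forth between the metric condition $d(g_i,g_{i+1})\le r$ and the algebraic condition $g_i^{-1}g_{i+1}\in K$, using left invariance of $d$ to translate distances to and from the identity $e$. Throughout, I will use that for a left invariant metric, $d(g,h)=d(e,g^{-1}h)$, and that the closed balls $\bar{B}(e,r)=\{g\in G\mid d(e,g)\le r\}$ are bounded subsets of $(G,d)$.

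For the forward direction, suppose $x=g_0,\ldots,g_n=y$ is an $r$-chain in $X$, so $d(g_i,g_{i+1})\le r$ for all $i\in\{0,\ldots,n-1\}$. I would set $K=\bar{B}(e,r)$. Since $K$ has diameter at most $2r$, it is bounded in $(G,d)$, hence $K\in\mathcal{F}$. Left invariance gives $d(e,g_i^{-1}g_{i+1})=d(g_i,g_{i+1})\le r$, so $g_i^{-1}g_{i+1}\in K$, i.e.\ $g_{i+1}\in g_iK$, and each $g_i$ already lies in $X$ by hypothesis. Thus the sequence is a $K$-chain in the sense of Definition \ref{chian2}.

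For the converse, suppose $x=g_0,\ldots,g_n=y$ is a $K$-chain in $X$ for some $K\in\mathcal{F}$, i.e.\ $g_{i-1}^{-1}g_i\in K$ for each $i$. Since $K$ is a bounded subset of $(G,d)$, fix any $k_0\in K$ and set
\[
r=\sup_{k\in K}d(e,k)\le d(e,k_0)+\operatorname{diam}(K)<+\infty.
\]
By left invariance, $d(g_{i-1},g_i)=d(e,g_{i-1}^{-1}g_i)\le r$, so the sequence is an $r$-chain in $X$.

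I do not anticipate a serious obstacle here; the only mild point worth being careful about is that $K$ need not contain $e$, so boundedness of $K$ has to be used via the diameter plus a fixed point of $K$ to see that the supremum defining $r$ is finite. Both implications are driven entirely by left invariance, which is the only property of $d$ that enters the argument.
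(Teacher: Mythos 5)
Your proof is correct and is exactly the routine argument the paper omits (its proof of Proposition \ref{metcut} is just ``It is straightforward''): both directions reduce to the identity $d(g_{i},g_{i+1})=d(e,g_{i}^{-1}g_{i+1})$ from left invariance, with $K=\bar{B}(e,r)$ in one direction and $r=\sup_{k\in K}d(e,k)$ in the other. Your remark that $e$ need not lie in $K$, so finiteness of this supremum must be extracted from $\operatorname{diam}(K)$ plus a fixed reference point, is a correct and worthwhile point of care.
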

\begin{proof}
It is straightforward.
\end{proof}
\begin{definition}\label{cut}
Let $(X,\mathcal{E})$ be a coarse space. Suppose that $A$ and $B$ are two asymptotically disjoint subsets of $X$. We call $C\subseteq X$ an \emph{asymptotic cut} between $A$ and $B$ if $C$ is asymptotically disjoint from both $A$ and $B$ and for each $E\in \mathcal{E}$ there exists some $F\in \mathcal{E}$ such that each $E$-chain from an element of $A$ to an element of $B$ meets $F(C)$.
\end{definition}
\begin{proposition}\label{cut1}
Let $G$ be a group and let $\mathcal{F}$ denote a generating family on $G$. Assume that $X\subseteq G$ is nonempty and consider $X$ with the subspace coarse structure. For two asymptotically disjoint subsets $A$ and $B$ of $X$, the subset $C$ of $X$ is an asymptotic cut between $A$ and $B$ in $X$ if and only if $C$ is asymptotically disjoint from both $A$ and $B$ and for each $H\in \mathcal{F}$ there exists some $K\in \mathcal{F}$ such that each $H$-chain in $X$ from an element of $A$ to an element of $B$ in $X$ has a member in $CK$.
\end{proposition}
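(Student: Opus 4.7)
The plan is to reduce the proposition to two easy translations: one between $E$-chains and $H$-chains (already supplied by the proposition preceding Proposition \ref{metcut}) and one between the coarse neighborhood $F(C)$ and the group translate $CK$. Since the asymptotic-disjointness requirements in Definition \ref{cut} and in the statement of Proposition \ref{cut1} coincide verbatim, the only real work is the chain/meeting condition.

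First I would recall that under $\mathcal{E}_{\mathcal{F}}$ every entourage $E$ is contained in some $G(H\times H)$ with $H\in\mathcal{F}$, and that the induced coarse structure on $X$ consists of intersections with $X\times X$. Hence by the proposition immediately before Proposition \ref{metcut}, a finite sequence in $X$ is an $E$-chain for some $E\in(\mathcal{E}_{\mathcal{F}})_X$ if and only if it is an $H$-chain in $X$ for some $H\in\mathcal{F}$. This matches the $E$'s in Definition \ref{cut} with the $H$'s in the statement.

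The second step is the neighborhood translation. If $F\in(\mathcal{E}_{\mathcal{F}})_X$ with $F\subseteq G(K'\times K')$ for some $K'\in\mathcal{F}$, and $y\in F(C)$, then $(y,c)\in F$ for some $c\in C$, so $y=gk_1$ and $c=gk_2$ with $k_1,k_2\in K'$, giving $y=c\,k_2^{-1}k_1\in C K$ with $K=(K')^{-1}K'\in\mathcal{F}$. Thus $F(C)\subseteq CK$. Conversely, given $K\in\mathcal{F}$, set $K''=K\cup\{e\}\in\hat{\mathcal{F}}$ and $F=G(K''\times K'')\cap(X\times X)$; then for any $y\in CK\cap X$, writing $y=ck$ with $c\in C$, $k\in K$ gives $(y,c)=c(k,e)\in G(K''\times K'')$, so $y\in F(C)$. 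Hence $CK\cap X\subseteq F(C)$. Since an $H$-chain lies in $X$, ``meets $F(C)$'' and ``meets $CK$'' then coincide for appropriately paired $F$ and $K$.

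With these two translations in hand the equivalence is immediate. For the forward direction, given $H\in\mathcal{F}$ set $E=G(H\times H)\cap(X\times X)$, obtain $F$ from Definition \ref{cut}, and use the first inclusion above to produce $K$ with $F(C)\subseteq CK$; every $H$-chain from $A$ to $B$ is an $E$-chain and thus meets $F(C)\subseteq CK$. The reverse direction is symmetric, using the second inclusion. The only point requiring care is to stay inside the subspace coarse structure on $X$ by intersecting everything with $X\times X$, which is routine; I do not foresee a substantive obstacle beyond this bookkeeping.
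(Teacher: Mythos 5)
Your argument is correct and in fact supplies the details that the paper leaves out entirely (its proof of Proposition \ref{cut1} is just ``It is straightforward''); the two translations you isolate --- $E$-chains versus $H$-chains via the proposition preceding Proposition \ref{metcut}, and $F(C)$ versus $CK$ via $K=(K')^{-1}K'$ and $K''=K\cup\{e\}$ --- are exactly the right reductions. One small slip: in the forward direction, with $E=G(H\times H)\cap(X\times X)$ an $H$-chain step $g_i=g_{i-1}h$ gives $(g_{i-1},g_i)=g_{i-1}(e,h)$, which lies in $G(H\times H)$ only if $e\in H$; you should take $E=G(H'\times H')\cap(X\times X)$ with $H'=H\cup HH^{-1}\in\mathcal{F}$ (the same device the paper uses when identifying $\lambda_{\mathcal{F}}$ with $\lambda_{\mathcal{E}_{\mathcal{F}}}$), exactly as you already did with $K''$ in the reverse direction.
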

\begin{proof}
It is straightforward.
\end{proof}
By using Proposition \ref{metcut} and Proposition \ref{cut1} it is evident that Definition \ref{cut} is a natural generalization of the well known definition of the asymptotic cut in finitely generated groups (\cite{asdim}).\\
We recall the following definition from \cite{me2}.
\begin{definition}\label{large}
Assume that $(X,\lambda)$ is an AS.R space. A subset $C$ of $X$ is called a \emph{large scale separator} between two asymptotically disjoint subsets $A,B\subseteq X$, if it is asymptotically disjoint from both $A$ and $B$ and we have $X=X_{1}\cup X_{2}$ such that $X_{1}$ and $X_{2}$ are asymptotically disjoint from $A$ and $B$ respectively and if $L_{1}\lambda L_{2}$ for $L_{1}\subseteq X_{1}$ and $L_{2}\subseteq X_{2}$, then there is a subset $L$ of $C$ such that $L_{1}\lambda L$.
\end{definition}
We recall that a closed subset $C$ of a topological space $X$ is called a separator between closed and disjoint subsets $A$ and $B$ of $X$ is $X\setminus C=U\cup V$, such that $U$ and $V$ are two open and disjoint subsets of $X$ and they contain $A$ and $B$ respectively.\\ The following proposition is a direct consequence of Proposition 3.2 of \cite{me2}.
\begin{proposition}\label{asto}
Let $\mathcal{F}$ be a generating family on the normal topological group $G$. Assume that $\lambda_{\mathcal{F}}$ is an asymptotically normal AS.R on $G$ and $\mathcal{E}_{\mathcal{F}}$ is a proper and compatible coarse structure on $G$. Then for two asymptotically disjoint subsets $A$ and $B$ of $G$ if $C$ is a large scale separator between $A$ and $B$ then $\nu C$ is a separator between $\nu A$ and $\nu B$ in $\nu G$.
\end{proposition}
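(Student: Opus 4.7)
The plan is to recognize Proposition \ref{asto} as a specialization of Proposition 3.2 of \cite{me2} to the setting of topological groups with generating families, so that the ``proof'' will amount to verifying that the hypotheses of that earlier result are met. First, I would collect the data: the assumptions tell us that $\lambda_{\mathcal{F}}$ is a proper, compatible, and asymptotically normal AS.R on the normal topological group $G$. The paragraph just preceding the statement observes that under exactly these conditions the relation $\delta_{\mathcal{F}}$ is a compatible proximity on $G$, so its Smirnov compactification exists and coincides, up to homeomorphism, with the Higson compactification $hG$ of $(G,\mathcal{E}_{\mathcal{F}})$.

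Second, I would unpack what this identification gives at the level of the corona. The homeomorphism $hG\cong G_{\delta_{\mathcal{F}}}$ restricts to a homeomorphism of the two coronas and preserves closures of subsets of $G$. Hence for any $S\subseteq G$ the set $\nu S=\bar S\cap\nu G$ coincides with the proximity-theoretic boundary of $S$ used in \cite{me2}, and a subset of $\nu G$ is a separator between two other subsets with respect to the Higson compactification precisely when it is one with respect to the Smirnov compactification. This matching of the topological data on the corona is the one bookkeeping step that has to be spelled out.

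Third, with hypotheses and notation aligned, I would invoke Proposition 3.2 of \cite{me2} directly. That proposition is the abstract AS.R statement: for an asymptotically normal AS.R space with a compatible and proper AS.R on a normal topological space, a large scale separator $C$ between two asymptotically disjoint subsets $A,B$ induces a separator $\nu C$ between $\nu A$ and $\nu B$ in the corona of the asymptotic compactification. Applying this to our $A,B,C\subseteq G$ and transporting the conclusion along the homeomorphism of the previous step yields the statement of Proposition \ref{asto}.

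The only point that requires any thought is the verification that ``separator'' and ``closure'' translate cleanly between the Smirnov and Higson pictures; once that is in hand, no further technical obstacle remains, which is consistent with the paper's own remark that the result is a direct consequence of the cited proposition.
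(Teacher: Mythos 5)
Your proposal matches the paper exactly: the paper itself offers no argument beyond the remark that the statement ``is a direct consequence of Proposition 3.2 of \cite{me2},'' relying on the preceding paragraph's identification of the Smirnov compactification of $(G,\delta_{\mathcal{F}})$ with the Higson compactification of $(G,\mathcal{E}_{\mathcal{F}})$. Your additional care in checking that closures, coronas, and the notion of separator transport along that homeomorphism is exactly the bookkeeping the paper leaves implicit, so nothing further is needed.
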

\begin{corollary}
Let $G$ be a normal and locally compact topological group such that $\lambda_{\mathcal{K}}$ is asymptotically normal. Then if $C\subseteq G$ is a large scale separator between asymptotically disjoint subsets $A$ and $B$ of $G$, then $\nu C$ is a separator between $\nu A$ and $\nu B$ in $\nu G$.
\end{corollary}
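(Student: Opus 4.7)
The plan is to recognize this corollary as the direct specialization of Proposition \ref{asto} to the particular generating family $\mathcal{F}=\mathcal{K}$, and to verify that all the hypotheses of that proposition are indeed satisfied in the present setting. So the entire proof reduces to a checklist of the earlier results proved in the paper.

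First I would set $\mathcal{F}=\mathcal{K}$, the family of all subsets of $G$ with compact closure, which was already shown to be a generating family in the paper. Two of the hypotheses of Proposition \ref{asto} are built into the statement of the corollary: $G$ is assumed normal, and $\lambda_{\mathcal{K}}$ is assumed asymptotically normal. What remains is to check that $\mathcal{E}_{\mathcal{K}}$ is both proper and compatible with the topology of $G$.

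For properness, I would invoke Proposition \ref{proper}: $\mathcal{E}_{\mathcal{F}}$ is proper if and only if each member of $\mathcal{F}$ has compact closure. With $\mathcal{F}=\mathcal{K}$ this is tautological, so $\mathcal{E}_{\mathcal{K}}$ is always proper. For compatibility with the topology, I would invoke Corollary \ref{com}, which gives precisely that $\lambda_{\mathcal{K}}$ (equivalently $\mathcal{E}_{\mathcal{K}}$) is compatible with the topology of $G$ if and only if $G$ is locally compact; and local compactness is part of the hypothesis.

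With all four hypotheses of Proposition \ref{asto} in place for the generating family $\mathcal{K}$, a direct application yields the desired conclusion: if $C$ is a large scale separator between the asymptotically disjoint subsets $A$ and $B$ of $G$, then $\nu C$ separates $\nu A$ and $\nu B$ in $\nu G$. There is no real obstacle in the argument; the only thing to be careful about is not to conflate the two meanings of $\mathcal{K}$ in the paper (compact subsets versus subsets with compact closure) when invoking Proposition \ref{proper}, but the definitions have been arranged so that this is harmless.
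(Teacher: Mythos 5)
Your proposal is correct and follows exactly the paper's own route: the paper also proves this corollary by applying Proposition \ref{asto} with $\mathcal{F}=\mathcal{K}$, using Corollary \ref{com} for compatibility (your additional explicit appeal to Proposition \ref{proper} for properness is a detail the paper leaves implicit). Nothing further is needed.
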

\begin{proof}
It is a direct consequence of Corollary \ref{com} and Proposition \ref{asto}.
\end{proof}
\begin{proposition}\label{septocut}
Let $(X,\mathcal{E})$ be a coarse space. Assume that $A$ and $B$ are two asymptotically disjoint subsets of the AS.R space $(G,\lambda_{\mathcal{E}})$. Then each large scale separator between $A$ and $B$ in $(X,\lambda_{\mathcal{E}})$ is an asymptotic cut between $A$ and $B$ in $(X,\mathcal{E})$.
\end{proposition}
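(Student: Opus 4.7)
The plan is to verify both defining conditions of an asymptotic cut. The asymptotic disjointness of $C$ from $A$ and from $B$ is immediate from the definition of a large scale separator, so the real content is the chain-meeting condition. Fix an arbitrary $E\in\mathcal{E}$; after replacing $E$ by $E\cup E^{-1}\cup\Delta$ I may assume $E$ is symmetric and reflexive. From the large scale separator $C$ I have a decomposition $X=X_{1}\cup X_{2}$ with $X_{1}$ asymptotically disjoint from $A$ and $X_{2}$ asymptotically disjoint from $B$, and consequently $A\cap X_{1}$ and $B\cap X_{2}$ are bounded (each is asymptotically alike to itself while sitting inside an asymptotically disjoint pair).

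The main idea is to look at the $E$-frontier between the two halves. Set $M_{1}=E(X_{2})\cap X_{1}$ and $M_{2}=E(X_{1})\cap X_{2}$. Symmetry of $E$ gives $M_{1}\subseteq E(M_{2})$ and $M_{2}\subseteq E(M_{1})$, so $M_{1}\lambda_{\mathcal{E}}M_{2}$. Applying the separator condition to the pair $M_{1}\subseteq X_{1}$, $M_{2}\subseteq X_{2}$ produces $L\subseteq C$ with $M_{1}\lambda_{\mathcal{E}}L$, hence some $E'\in\mathcal{E}$ with $M_{1}\subseteq E'(L)\subseteq E'(C)$. Then also $M_{2}\subseteq E(M_{1})\subseteq (E\circ E')(C)$.

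Next I absorb the bounded exceptional pieces $A\cap X_{1}$ and $B\cap X_{2}$ into an entourage-controlled neighbourhood of $C$. Fix any $c_{0}\in C$; since $\{c_{0}\}\cup(A\cap X_{1})$ and $\{c_{0}\}\cup(B\cap X_{2})$ are bounded, their self-products give entourages $E_{A},E_{B}\in\mathcal{E}$ with $A\cap X_{1}\subseteq E_{A}(\{c_{0}\})\subseteq E_{A}(C)$ and $B\cap X_{2}\subseteq E_{B}(C)$. Let $F=E'\cup(E\circ E')\cup E_{A}\cup E_{B}\in\mathcal{E}$; then $M_{1}\cup M_{2}\cup(A\cap X_{1})\cup(B\cap X_{2})\subseteq F(C)$. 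For an arbitrary $E$-chain $a=x_{0},\dots,x_{n}=b$ with $a\in A$ and $b\in B$: if $a\in X_{1}$ then $x_{0}\in A\cap X_{1}\subseteq F(C)$; if $b\in X_{2}$ then $x_{n}\in B\cap X_{2}\subseteq F(C)$; otherwise $a\in X_{2}$ and $b\in X_{1}$, and taking $i_{0}$ to be the largest index with $x_{i_{0}}\in X_{2}$ one has $i_{0}<n$, $x_{i_{0}+1}\in X_{1}$, and $(x_{i_{0}},x_{i_{0}+1})\in E$, whence $x_{i_{0}+1}\in M_{1}\subseteq F(C)$. In every case the chain meets $F(C)$.

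The main obstacle is really bookkeeping: assembling a single $F\in\mathcal{E}$ that controls the frontier sets $M_{1},M_{2}$ and the bounded exceptional pieces $A\cap X_{1}$, $B\cap X_{2}$ simultaneously. The only genuinely subtle wrinkle is the degenerate possibility $C=\emptyset$, in which the separator condition forces $M_{1}=M_{2}=\emptyset$ and $A\cap X_{1}=B\cap X_{2}=\emptyset$, and one checks that no $E$-chain from $A$ to $B$ can exist at all, so that the cut condition holds vacuously.
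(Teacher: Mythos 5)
Your argument is correct and follows essentially the same route as the paper's: the paper works with the sets of chain-points where an $E$-chain passes from $X_{2}$ to $X_{1}$ (a subset of your frontier sets $M_{1},M_{2}$), shows they are asymptotically alike, invokes the separator condition to place them inside $O(C)$, and observes that every crossing chain contains such a point; your version is if anything more careful, since the paper disposes of the bounded sets $A\cap X_{1}$ and $B\cap X_{2}$ with an unjustified ``without loss of generality'' while you absorb them into $F(C)$ explicitly. The one inaccurate remark is your treatment of $C=\emptyset$: the separator condition does not force $A\cap X_{1}=\emptyset$ and $B\cap X_{2}=\emptyset$ (for instance $X_{1}=X$, $X_{2}=\emptyset$ is a legitimate decomposition whenever $A$ is bounded), and in that degenerate situation connectedness of $\mathcal{E}$ produces $E$-chains from $A$ to $B$ that meet no set of the form $F(\emptyset)$, so the proposition itself quietly fails when one of $A,B$ is bounded and nonempty --- an edge case the paper's proof ignores as well, so this does not count against your argument in the regime where the statement is actually true.
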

\begin{proof}
Suppose that $C$ is a large scale separator between $A$ and $B$ and $X=X_{1}\cup X_{2}$ such that $X_{1}$ and $X_{2}$ satisfy the properties of Definition \ref{large}. Since $A$ and $X_{1}$ are asymptotically disjoint so $A\cap X_{1}$ is bounded. Similarly $B\cap X_{2}$ is bounded. So, without loss of generality, we can assume that $A\subseteq X_{2}$ and $B\subseteq X_{1}$. Suppose that $E\in \mathcal{E}$. Let $L_{1}$ denote the set of all $x\in X_{1}$ such that for some $E$-chain $x_{0},...,x_{n}$ from $A$ to $B$ there exists some $i\in \{1,...,n\}$ such that $x=x_{i}$ and $x_{i-1}\in X_{2}$. Similarly let $L_{2}$ denotes the set of all $x\in X_{2}$ such that for some $E$-chain $x_{0},...,x_{n}$ joining $A$ and $B$ and some $i\in \{0,...,n-1\}$, $x=x_{i}$ and $x_{i+1}\in X_{1}$. Clearly $L_{1}\subseteq F(L_{2})$ and $L_{2}\subseteq F(L_{1})$, where $F=E\cup E^{-1}$. So $L_{1}\lambda_{\mathcal{E}}L_{2}$. Thus there exists some $L\subseteq C$ such that $L_{1}\lambda_{\mathcal{E}}L$. It means that $L_{1}\subseteq O(L)$ for some $O\in \mathcal{E}$, which shows each $E$-chain joining $A$ and $B$ meets $O(C)$.
\end{proof}
\begin{definition}
Let $G$ be a group and let $\mathcal{F}$ denote a generating family on $G$. We say $G$ is $H$-\emph{convex} for some $H\in \mathcal{F}$, if $H$ generates $G$ and for each $K\in \mathcal{F}$ there exists some $n\in \mathbb{N}$ such that $K\subseteq H^{n}$.
\end{definition}
\begin{example}
Let $G$ be a finitely generated group and let $\mathcal{F}$ denote the family of all finite subsets of $G$. Assume that the finite subset $K$ generates $G$. Then clearly $G$ is $K$-convex. Recall that the coarse structure associated to the generating family $\mathcal{F}$ is equal to the bounded coarse structure associated to the metric $d_{K}$.
\end{example}
\begin{proposition}
Let $G$ be a compactly generated locally compact group. Consider $G$ with the AS.R $\lambda_{\mathcal{K}}$, where $\mathcal{K}$ denotes the family of all compact subsets of $G$. Then $G$ is $H$-convex for some compact subset $H\subseteq G$.
\end{proposition}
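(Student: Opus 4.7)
The plan is to build a single compact symmetric neighborhood $H$ of the identity that simultaneously generates $G$ and absorbs every compact subset in finitely many steps. Compact generation supplies a compact set $S$ with $\langle S\rangle = G$, and local compactness (together with Hausdorffness, which is standard in this setting) yields an open neighborhood $U$ of the identity $e$ whose closure $\overline{U}$ is compact. I would set
\[
H \;=\; S \,\cup\, S^{-1} \,\cup\, \overline{U} \,\cup\, \{e\}.
\]
Then $H$ is compact (a finite union of compact sets), symmetric, contains the open set $U$ so it is a neighborhood of $e$, contains $e$ itself, and still generates $G$ since $S\subseteq H$. This already takes care of the generation condition in the definition of $H$-convexity.

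For the second condition, it suffices to prove that every compact subset $K\subseteq G$ lies in some $H^n$, since by definition each element of $\mathcal{K}$ has compact closure. Two basic observations drive the argument: because $e\in H$, the powers nest as $H\subseteq H^2\subseteq H^3\subseteq\cdots$, and because $H$ generates $G$, the union $\bigcup_n H^n$ equals $G$. Given a compact $K$, I would cover it by the family of open translates $\{gU : g\in K\}$ and extract a finite subcover $g_1U,\ldots,g_kU$ by compactness. For each $i$ pick $n_i\in\mathbb{N}$ with $g_i\in H^{n_i}$, and set $n = 1 + \max_i n_i$. Then for every $i$,
\[
g_iU \;\subseteq\; H^{n_i} \cdot H \;\subseteq\; H^{n_i+1} \;\subseteq\; H^n,
\]
so $K\subseteq\bigcup_i g_iU\subseteq H^n$ as required.

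The only point that needs care is packaging everything into one set $H$ that is compact, symmetric, generating, \emph{and} a neighborhood of $e$: the neighborhood property is what makes the finite-subcover step possible, while $e\in H$ is what gives the nesting $H^m\subseteq H^n$ for $m\le n$. The explicit construction of $H$ above supplies all four properties at once, after which both parts of $H$-convexity fall out from the compactness argument; no further large-scale machinery is needed.
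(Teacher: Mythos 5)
Your proof is correct, but it takes a genuinely different route from the paper. The paper fixes a compact generating set $H$ and invokes the Baire category theorem: since $G=\bigcup_{i}H^{i}$ and $G$ is locally compact (hence Baire), some power $H^{n}$ has nonempty interior, and translates of that interior are then used to cover an arbitrary compact $K$. You instead sidestep Baire entirely by \emph{enlarging} the generating set at the outset to $H=S\cup S^{-1}\cup\overline{U}\cup\{e\}$, so that $H$ is a compact identity neighborhood by construction, and the finite-subcover argument goes through directly. The trade-off: the paper's argument shows (essentially) that \emph{any} compact generating set, once symmetrized, witnesses convexity, while yours produces one particular witness --- which is all the proposition asks for, since it only requires $H$-convexity for \emph{some} compact $H$. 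Your version is the more elementary of the two, using nothing beyond the existence of a relatively compact identity neighborhood. One cosmetic point: $\overline{U}$ need not be symmetric, so $H$ as written is not literally symmetric unless you choose $U$ symmetric (always possible) or throw in $\overline{U}^{-1}$; but symmetry is never actually used in your argument --- what matters is that $H\supseteq S\cup S^{-1}$ and $e\in H$, which gives $G=\bigcup_{n}H^{n}$ and the nesting $H^{m}\subseteq H^{n}$ for $m\le n$, and both of these your construction does supply.
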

\begin{proof}
Let $H$ be a compact set of generators for $G$. Clearly $G=\bigcup_{i\in \mathbb{N}}H^{i}$. By using the Baire category theorem there exists some $n\in \mathbb{N}$ such that $H^{n}$ has nonempty interior. Choose the open neighbourhood $U$ of the identity such that for some $a\in H^{n}$, $aU\subseteq H^{n}$. Assume that $K\subseteq G$ is a compact subset. So there exists $a_{1},...,a_{l}$ such that $K\subseteq \bigcup_{i=1}^{l}a_{i}U$. Let $m\in \mathbb{N}$ be such that $a_{1}a^{-1},...,a_{l}a^{-1}\in H^{m}$. Now for $x\in K$ choose $i\in \{1,...,l\}$ such that $x\in a_{i}U$. Thus $x\in a_{i}a^{-1}aU\subseteq H^{m+n}$. It shows $K\subseteq H^{m+n}$, which completes our proof.
\end{proof}
\begin{proposition}\label{cuttosep}
Assume that $G$ is a group and $\mathcal{F}$ is a generating set on $G$. If $G$ is $H$-convex for some $H\in \mathcal{F}$ then each asymptotic cut between asymptotic disjoint subsets of $G$ is a large scale separator between them.
\end{proposition}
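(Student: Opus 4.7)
The plan is to construct an explicit partition $G = X_1 \cup X_2$ satisfying Definition \ref{large}, using $H$-convexity to convert the ``chain-meeting'' information supplied by the asymptotic cut property into genuine boundedness statements. First I would replace $H$ by $H \cup H^{-1}$ (still in $\mathcal{F}$, still a generating set of $G$, still a witness to $H$-convexity) so that $H$-chains may be reversed; applying the asymptotic cut condition to this $H$, fix $K \in \mathcal{F}$ such that every $H$-chain from $A$ to $B$ has some vertex in $CK$. Set
\[
X_1 = \{g \in G : \text{some } H\text{-chain from } g \text{ to an element of } B \text{ has all its vertices outside } CK\},
\]
and $X_2 = G \setminus X_1$.

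The key step is to verify that $X_1$ is asymptotically disjoint from $A$ (and symmetrically that $X_2$ is asymptotically disjoint from $B$); by Corollary \ref{dis}, this reduces to bounding $X_1 \cap AK'$ for each $K' \in \mathcal{F}$. Using $H$-convexity, pick $n$ with $K' \subseteq H^n$, so any $x = ak' \in X_1 \cap AK'$ sits at the end of an $H$-chain of length at most $n$ starting at $a \in A$. Prepending this chain to a certifying chain from $x$ to $B$ produces an $H$-chain from $A$ to $B$, which the asymptotic cut property forces to meet $CK$; since its tail avoids $CK$, the meeting point lies on the $a$-to-$x$ segment, whence $x \in CKH^n$. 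Intersecting with $AK'$ and invoking the asymptotic disjointness of $A$ and $C$ (Corollary \ref{dis}, applied to $C \cap A \cdot K'H^n K^{-1}$) shows that $X_1 \cap AK'$ is bounded. The argument for $X_2 \cap BK'$ is symmetric: the failure $x \notin X_1$ forces the short $H$-chain from $x$ to the nearby $b \in B$ to meet $CK$, again placing $x$ in $CKH^n$.

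The separator condition follows by the same mechanism. Given $L_1 \subseteq X_1$, $L_2 \subseteq X_2$ with $L_1 \lambda_{\mathcal{F}} L_2$ via some $K_0 \in \mathcal{F}$, choose $n$ with $K_0 \subseteq H^n$. For each $x \in L_1$ there is $y \in L_2$ at $H$-chain distance $\leq n$ from $x$; prepending this chain to a certifying chain from $x$ to $B$ gives an $H$-chain from $y \in X_2$ to $B$, which must meet $CK$ on the $y$-to-$x$ portion, so $x \in CKH^n$. I would then pick, for each $x \in L_1$, some $c(x) \in C$ with $c(x)^{-1}x \in KH^n$, and set $L = \{c(x) : x \in L_1\} \subseteq C$. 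The inclusions $L_1 \subseteq L \cdot KH^n$ and $L \subseteq L_1 \cdot H^n K^{-1}$ then yield $L_1 \lambda_{\mathcal{F}} L$, as required.

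The hardest part is the asymptotic-disjointness verification, in which $H$-convexity is the decisive ingredient: without a uniform $n$ with $K' \subseteq H^n$, an arbitrary point of $AK'$ cannot be connected to $A$ by an $H$-chain short enough to be ``absorbed'' on the $CK$-free half, and the asymptotic-cut hypothesis cannot be brought to bear.
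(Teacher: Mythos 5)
Your proof is correct and follows essentially the same route as the paper's: both partition $G$ according to $H$-chain reachability in the complement of $CK$ and use $H$-convexity to turn $\lambda_{\mathcal{F}}$-closeness into a short $H$-chain that must cross $CK$, thereby placing the relevant points in a bounded neighbourhood of $C$. The only differences are cosmetic --- you anchor the reachable component at $B$ rather than at $A$, and you explicitly verify the asymptotic-disjointness clauses of Definition \ref{large}, which the paper's argument leaves implicit.
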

\begin{proof}
First it is straightforward to show that if the group $G$ is $H$-convex for some $H$ in $\mathcal{F}$ then it is also $H\cup H^{-1}$-convex. Thus without loss of generality we can assume that $H=H^{-1}$. Let $A$ and $B$ be two asymptotically disjoint subsets of $X$ and let $C$ be an asymptotic cut between $A$ and $B$. By the definition of asymptotic cut there exists some $K\in \mathcal{F}$ such that each $H$-chain connecting $A$ and $B$ meets $CK$. Since $A$ and $B$ are asymptotically disjoint from $C$, $A\cap CK$ and $B\cap CK$ are bounded so we can assume, without loss of generality, $A,B\subseteq G\setminus CK$. Let $Y=G\setminus CK$. For $a\in Y$ assume that $[a]_{H}^{Y}$ denotes all points $b\in Y$ such that there exists a $H$-chain in $Y$ connecting $a$ and $b$. Let $X_{1}=\cup_{a\in A}[a]_{H}^{Y}$ and $X_{2}=X\setminus X_{1}$. Since each $H$-chain connecting $A$ and $B$ meets $CK$, $A\subseteq X_{1}$ and $B\subseteq X_{2}$. Suppose that $L_{1}\subseteq X_{1}$ and $L_{2}\subseteq X_{2}$ and $L_{1}\lambda_{\mathcal{F}}L_{2}$. So there is some $O\in \mathcal{F}$ such that $L_{1}\subseteq L_{2}O$ and $L_{2}\subseteq L_{1}O$. Since we assumed that $G$ is $H$-convex, there exists some $n\in \mathbb{N}$ such that $O\subseteq H^{n}$. So $L_{1}\subseteq L_{2}H^{n}$. It shows that for each $c\in L_{1}$ there exists some $d\in L_{2}$ and a $H$-chain $x_{0}=c,...,x_{n}=d$. Since $L_{2}\subseteq X_{2}$ and $[c]_{H}^{Y}\subseteq X_{1}$, $d\notin [c]_{H}^{Y}$. Thus there is some $i\in \{1,..,n\}$ such that $x_{i}\in CK$. Thus there is some $k\in K$ such that $x_{i}k^{-1}\in C$. Let $\tilde{H}=H\cup ...\cup H^{n}$ and let $F=\tilde{H}(K^{-1}\cup K)$. Clearly $F=F^{-1}$. Since $x_{i}\in c\tilde{H}$ so $x_{i}k^{-1}\in cF$. Thus for each $c\in L_{1}$ we found some $g\in C$ such that $c\in gF$. It shows that $L_{1}\subseteq CF$. If we assume $L=C\cap L_{1}F$. Clearly $L_{1}\subseteq LF$ and $L\subseteq L_{1}F$, which shows $L\lambda_{\mathcal{F}}L_{1}$.
\end{proof}
\begin{remark}
One can shows that if $G$ is a group and $\mathcal{F}$ is a generating family on $G$ such that $G$ is $H$-convex for some $H\in \mathcal{F}$, then the AS.R space $(G,\lambda_{\mathcal{F}})$ is metrizable and it is $r$-convex for some $r>0$. Thus Proposition \ref{cuttosep} is a consequence of Proposition 4.9 of \cite{me2}.
\end{remark}
The following example shows that the inverse of Proposition \ref{septocut} is not true in general and in addition it shows being $H$-convex is an important condition for Proposition \ref{cuttosep}.
\begin{example}
Assume that $G$ is the additive group $\mathbb{Q}\times \mathbb{Q}$ and consider $G$ with the discrete topology. We assume $G$ with the AS.R $\lambda_{\mathcal{K}}$. Since each compact subset of $G$ is finite and $G$ is not finitely generated so $G$ is not $H$-convex for each compact subset $H$ of $G$. Remark that $G$ is hemicompact. Now suppose that $(p_{n})_{n\in \mathbb{N}}$ denotes the sequence of all prime numbers and $p_{1}<p_{2}<...$. Let $A=\{(n+\frac{1}{p_{n}},0)\mid n\in \mathbb{N}\}$ and $B=\{(0,n)\mid n\in \mathbb{N}\}$. It is easy to verify that for each finite subset $F$ of $G$, $(A+F)\bigcap B$ is finite. So $A$ and $B$ are asymptotically disjoint. Assume $F=\{(a_{1},b_{1}),...,(a_{k},b_{k})\}$ is a finite subset of $G$. For each $i\in \{1,..,k\}$ choose $m_{i}\in \mathbb{Z}$ and $n_{i}\in \mathbb{N}$ such that $a_{i}=\frac{m_{i}}{n_{i}}$. Suppose that $r\in \mathbb{N}$ is such that $p_{r}$ is strictly greater than each prime factor of $n_{1},...,n_{k}$. For $l_{1},...,l_{k}\in \mathbb{N}$, if $n+\frac{1}{p_{r}}+l_{1}a_{1}+...+l_{k}a_{k}=0$ then
$$np_{r}n_{1}...n_{k}+n_{1}...n_{k}+l_{1}m_{1}p_{r}n_{2}...n_{k}+l_{2}m_{2}p_{r}n_{1}n_{3}...n_{k}+...+l_{k}m_{k}p_{r}n_{1}...n_{k-1}=0$$
The last equality shows that there are integers $\alpha_{0},...,\alpha_{k}$ such that
$$\alpha_{0}p_{r}+n_{1}...n_{k}+\alpha_{1}p_{r}+...+\alpha_{k}p_{r}=0$$
So
$$-p_{r}(\alpha_{0}+...+\alpha_{k})=n_{1}...n_{k}$$
This means $p_{r}$ is a prime factor of $n_{1}...n_{k}$ which contradicts our chose of $p_{r}$. This argument shows that the set of all $i\in \mathbb{N}$ such that there exists a $F$-chain from $(0,i+\frac{1}{p_{i}})$ to an element of $B$ is finite. Therefore the set $C=\{(0,0)\}$ is an asymptotic cut between $A$ and $B$ ($C$ can be any finite subset of $G$). We are going to show that $C$ is not a large scale separator between $A$ and $B$. Suppose the set $X_{1}\subseteq X$ is such that $X_{2}=X\setminus X_{1}$ is asymptotically disjoint from $B$. So for each $\alpha \in \mathbb{Q}$ the set $A_{\alpha}=\{n\in \mathbb{N}\mid (\alpha,n)\in X_{2}\}$ is finite. For each $\alpha \in \mathbb{Q}$ if $A_{\alpha}\neq \emptyset$, let $M_{\alpha}=\max A_{\alpha}$. Thus $(\alpha,M_{\alpha})\in X_{2}$ and $(\alpha,M_{\alpha}+1)\in X_{1}$. If we assume that $X_{2}$ is asymptotically disjoint from $X_{1}$, then we find out there exists some $N\in \mathbb{N}$ such that for each $\alpha >N$, $A_{\alpha}=\emptyset$. It shows that $L_{1}=\{(n+\frac{1}{p_{n}},1)\mid n\geq N\}\subseteq X_{1}$. Let $L_{2}=\{(n+\frac{1}{p_{n}},0)\mid n\geq N\}$. Clearly $L_{2}\subseteq A$ and $L_{1}\lambda L_{2}$. Since $L_{1}$ and $L_{2}$ are unbounded (they are infinite), $X_{1}$ and $A$ are not asymptotically disjoint. This argument shows that it is impossible to have $X=X_{1}\bigcup X_{2}$ such that $X_{1}$ and $X_{2}$ are asymptotically disjoint and they are asymptotically disjoint from $A$ and $B$ respectively. Therefore $C$ can not be a large scale separator between $A$ and $B$.
\end{example}
Let us recall that if $Y$ is a nonempty subset of the coarse space $(X,\mathcal{E})$ we denote the induced coarse structure on $Y$ by $\mathcal{E}_{Y}$.
\begin{definition}\label{dimensiongrad}
Let $X$ be a set and let $\mathcal{E}$ be a coarse structure on $X$. We denote the \emph{asymptotic dimensiongrad} of $(X,\mathcal{E})$ by $\operatorname{asDg}_{\mathcal{E}}(X)$ and define it inductively as follows. We say $\operatorname{asDg}_{\mathcal{E}}(X)=-1$ if and only if $X$ is bounded. For $n\in \mathbb{N}$ we say $\operatorname{asDg}_{\mathcal{E}}(X)\leq n$ if for each two asymptotically disjoint subsets $A$ and $B$ of $(X,\lambda_{\mathcal{E}})$ there exists an asymptotic cut $C$ between $A$ and $B$ such that $\operatorname{asDg}_{\mathcal{E}_{C}}(C)\leq n-1$. For $n\in \mathbb{N}\bigcup \{0\}$ we say $\operatorname{asDg}_{\mathcal{E}}(X)=n$ if $\operatorname{asDg}_{\mathcal{E}}(X)\leq n$ and $\operatorname{asDg}_{\mathcal{E}}(X)\leq n-1$ is not true. We say $(X,\mathcal{E})$ has infinite asymptotic dimensiongrad if $\operatorname{asDg}_{\mathcal{E}}(X)\leq n$ is not true for each $n\in \mathbb{N}$.
\end{definition}
\begin{proposition}
Assume that  $(X,\mathcal{E})$ and $(Y,\mathcal{F})$ are two coarsely equivalent coarse spaces. Then $\operatorname{asDg}_{\mathcal{E}}X=\operatorname{asDg}_{\mathcal{F}}Y$.
\end{proposition}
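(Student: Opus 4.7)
The plan is to argue by induction on $n\in\{-1,0,1,\dots\}$ that, for any coarse equivalence $f\colon (X,\mathcal{E})\to(Y,\mathcal{F})$, the inequality $\operatorname{asDg}_{\mathcal{E}}X\leq n$ implies $\operatorname{asDg}_{\mathcal{F}}Y\leq n$; the reverse implication then follows by applying the same statement with $f$ and a coarse inverse interchanged. Fix $g\colon Y\to X$ as a coarse inverse of $f$, and let $E_0\in\mathcal{E}$, $E_0'\in\mathcal{F}$ be entourages containing $\{(g(f(x)),x):x\in X\}$ and $\{(f(g(y)),y):y\in Y\}$ respectively. The base case $n=-1$ is immediate: if $X$ is bounded, then $f(X)$ is bounded in $Y$ (coarse maps send bounded sets to bounded sets), and $Y\subseteq E_0'(f(X))$ is bounded as well.

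For the inductive step I would fix asymptotically disjoint $A',B'\subseteq Y$ and set $A=g(A')$, $B=g(B')\subseteq X$. First, $A$ and $B$ are asymptotically disjoint in $(X,\lambda_{\mathcal{E}})$: any unbounded $L_1\subseteq A$, $L_2\subseteq B$ with $L_1\lambda_{\mathcal{E}}L_2$ would push forward via $f$ to sets $f(L_1),f(L_2)$ that remain unbounded (using that $f$ sends bounded sets to bounded sets and that $L_1\subseteq E_0(g(f(L_1)))$), lie $E_0'$-close to subsets $M_1'\subseteq A'$, $M_2'\subseteq B'$, and satisfy $f(L_1)\lambda_{\mathcal{F}}f(L_2)$, producing an unbounded $\lambda_{\mathcal{F}}$-alike pair inside $A'\times B'$ and contradicting their asymptotic disjointness. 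Invoking $\operatorname{asDg}_{\mathcal{E}}X\leq n$, choose an asymptotic cut $C\subseteq X$ between $A$ and $B$ with $\operatorname{asDg}_{\mathcal{E}_C}C\leq n-1$.

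Next I would verify that $f(C)$ is an asymptotic cut between $A'$ and $B'$ in $Y$. Its asymptotic disjointness from $A'$ (and similarly from $B'$) is shown by the same lift-and-contradict scheme, using a set-theoretic section $\sigma\colon f(C)\to C$ with $f\circ\sigma=\id_{f(C)}$ on the $f(C)$-side and $g$ on the $A'$-side to land back in $C$ and $A$ respectively. For the chain condition, given $E'\in\mathcal{F}$ and an $E'$-chain $y_0\in A',y_1,\dots,y_n\in B'$, the sequence $g(y_0),\dots,g(y_n)$ is a $(g\times g)(E')$-chain in $X$ from $A$ to $B$; by the cut property it meets $F(C)$ for some $F\in\mathcal{E}$, whence setting $F'=(f\times f)(F)\circ(E_0')^{-1}\in\mathcal{F}$ yields $y_i\in F'(f(C))$, as required.

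It remains to derive $\operatorname{asDg}_{\mathcal{F}_{f(C)}}f(C)\leq n-1$ from $\operatorname{asDg}_{\mathcal{E}_C}C\leq n-1$, which I would obtain from the inductive hypothesis by showing that the restriction $\phi=f|_C\colon C\to f(C)$ is a coarse equivalence between the subspace coarse structures. Using a section $\psi\colon f(C)\to C$ with $f\circ\psi=\id_{f(C)}$, one has $\phi\circ\psi=\id_{f(C)}$ and, for every $c\in C$, both $c$ and $\psi(f(c))$ are $E_0$-close to $g(f(c))$, so $(c,\psi(f(c)))\in(E_0\circ E_0^{-1})\cap(C\times C)\in\mathcal{E}_C$, yielding closeness of $\psi\circ\phi$ to $\id_C$; coarseness of $\psi$ is verified analogously by sandwiching $(\psi(y_1),\psi(y_2))$ between two applications of $E_0$ and one of $(g\times g)(E')$. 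I expect this subspace coarse-equivalence step to be the main obstacle, because the naive candidate $g|_{f(C)}$ lands in $X$ rather than in $C$, forcing the introduction of the set-theoretic section $\psi$ and careful entourage bookkeeping through $E_0$. Once this is settled, the inductive hypothesis applies to $(C,\mathcal{E}_C)$ and $(f(C),\mathcal{F}_{f(C)})$ and gives $\operatorname{asDg}_{\mathcal{F}}Y\leq n$, closing the induction.
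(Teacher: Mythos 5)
Your proposal is correct and follows essentially the same route as the paper's proof: induct on the dimension bound, transfer the asymptotically disjoint pair to the side where the bound is known, obtain there an asymptotic cut of smaller dimensiongrad, and transfer it back, verifying asymptotic disjointness, the chain condition (by transporting chains and composing with the closeness entourage), and that the two cuts are coarsely equivalent in their subspace coarse structures. The only differences are cosmetic --- you pull the pair back along the coarse inverse and push the cut forward, whereas the paper pushes the pair forward and pulls the cut back --- plus your explicit set-theoretic section for the subspace coarse equivalence, a point the paper dismisses as ``easy to show.''
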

\begin{proof}
Let us denote $\lambda_{\mathcal{E}}$ by $\lambda_{1}$ and $\lambda_{\mathcal{F}}$ by $\lambda_{2}$. We first show $\operatorname{asDg}_{\mathcal{E}}X\leq \operatorname{asDg}_{\mathcal{F}}Y$. We proceed by induction on $\operatorname{asDg}_{\mathcal{F}}Y$. If $\operatorname{asDg}_{\mathcal{F}}Y=-1$ then $Y$ is bounded and clearly $X$ is also bounded,  so $\operatorname{asDg}_{\mathcal{E}}X=-1$. Now assume that the inequality holds when $\operatorname{asDg_{\mathcal{F}}}Y\leq n$. Suppose that $\operatorname{asDg_{\mathcal{F}}}Y=n+1$. Also assume that $\phi: X\rightarrow Y$ is a coarse equivalence between coarse spaces $(X,\mathcal{E})$ and $(Y,\mathcal{F})$ and $\psi: Y\rightarrow X$ is a coarse inverse of $\phi$. Let $A$ and $B$ be two asymptotically disjoint subsets of $X$. By Proposition 2.3 of \cite{me3}, $\phi(A)$ and $\phi(B)$ are two asymptotically disjoint subsets of $Y$. Since $\operatorname{asDg}_{\mathcal{F}}Y=n+1$ there exists an asymptotic cut $C$ between $\phi(A)$ and $\phi(B)$ in $Y$ such that $\operatorname{asDg}_{\mathcal{F}_{C}}C\leq n$. It is easy to show that $(\psi(C),\mathcal{E}_{\psi(C)})$ is coarse equivalent to $(C,\mathcal{F}_{C})$, so by our assumption of induction $\operatorname{asDg}_{\mathcal{E}_{\psi(C)}}\psi(C)\leq n$. Now it remains to show that $\psi(C)$ is an asymptotic cut between $A$ and $B$. Again by using Proposition 2.3 of \cite{me3}, $\psi(C)$ is asymptotically disjoint from both $\psi(\phi(A))$ and $\psi(\phi(B))$. Since $\psi(\phi(A))\lambda_{1} A$ and $\psi(\phi(B))\lambda_{2} B$, $\psi(C)$ is asymptotically disjoint from both $A$ and $B$. Now assume that $E\in \mathcal{E}$. Since $\phi$ is a coarse map, $F=(\phi\times \phi)(E)\in \mathcal{F}$. Since $C$ is an asymptotic cut between $\phi(A)$ and $\phi(B)$, there exists some $O\in \mathcal{F}$ such that each $F$-chain joining an element of $\phi(A)$ to an element of $\phi(B)$ meets $O(C)$. Since $\psi$ is a coarse map, $L=(\psi\times \psi)(O)\in \mathcal{E}$. In addition since $\psi$ is a coarse equivalence and $\phi$ is its coarse inverse, there exists some symmetric $L^{\prime}\in \mathcal{E}$ such that for each $x\in X$, $(\psi(\phi(x)),x)\in L^{\prime}$. Let $M=L\circ L^{\prime}$. Assume that $a=x_{0},...,x_{n}=b$ is a $E$-chain from $a\in A$ to $b\in B$. So $\phi(x_{0}),...,\phi(x_{n})$ is a $F$-chain from $\phi(a)\in \phi(A)$ to $\phi(b)\in \phi(B)$ and hence it meets $O(C)$. It means there exists some $i\in \{0,...,n\}$ such that $(\phi(x_{i}),c)\in O$, for some $c\in C$. Thus $(\psi(\phi(x_{i})),\psi(c))\in L$ and hence $(x_{i},\psi(c))\in M$. It shows that each $E$-chain from $A$ to $B$ meets $M^{-1}(\psi(C))$ and hence $\psi(C)$ is an asymptotic cut between $A$ and $B$. Thus $\operatorname{asDg}_{\mathcal{E}}X\leq n+1=\operatorname{asDg}_{\mathcal{F}}Y$. Similarly one can show that $\operatorname{asDg}_{\mathcal{F}}Y\leq \operatorname{asDg}_{\mathcal{E}}X$.
\end{proof}
\begin{corollary}
Let $\mathcal{F}$ be a generating set on the group $G$ and let $\mathcal{I}$ be a generating set on the group $H$. If the coarse spaces $(G,\mathcal{E}_{\mathcal{F}})$ and $(H,\mathcal{E}_{\mathcal{I}})$ are coarsely equivalent then $\operatorname{asDg_{\mathcal{F}}}G=\operatorname{asDg_{\mathcal{I}}}H$.
\end{corollary}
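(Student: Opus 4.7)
The plan is to apply the preceding proposition directly to the coarse spaces $(G,\mathcal{E}_{\mathcal{F}})$ and $(H,\mathcal{E}_{\mathcal{I}})$. Since the corollary assumes these two coarse spaces are coarsely equivalent, the proposition immediately yields $\operatorname{asDg}_{\mathcal{E}_{\mathcal{F}}} G = \operatorname{asDg}_{\mathcal{E}_{\mathcal{I}}} H$.

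The only thing to verify is that this matches the notation of the corollary. In the statement we write $\operatorname{asDg}_{\mathcal{F}} G$ and $\operatorname{asDg}_{\mathcal{I}} H$, which are convenient abbreviations for $\operatorname{asDg}_{\mathcal{E}_{\mathcal{F}}} G$ and $\operatorname{asDg}_{\mathcal{E}_{\mathcal{I}}} H$ respectively, since a generating family determines its associated coarse structure unambiguously. With this identification the desired equality follows from the preceding proposition without further work.

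There is no real obstacle here: the corollary is a pure specialization of the general coarse invariance theorem to the subclass of coarse spaces coming from generating families on groups. The substantive work has already been done in the proof of the preceding proposition, where the asymptotic cut witnessing $\operatorname{asDg}_{\mathcal{F}} Y \leq n+1$ was transported along a coarse equivalence using Proposition 2.3 of \cite{me3}; once that machinery is available, the group-theoretic case requires no separate argument.
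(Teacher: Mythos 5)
Your proof is correct and matches the paper's intent exactly: the corollary is stated without proof precisely because it is the immediate specialization of the preceding proposition to the coarse spaces $(G,\mathcal{E}_{\mathcal{F}})$ and $(H,\mathcal{E}_{\mathcal{I}})$, with $\operatorname{asDg}_{\mathcal{F}}G$ understood as $\operatorname{asDg}_{\mathcal{E}_{\mathcal{F}}}G$. Nothing further is needed.
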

It is worth mentioning that if we replace the word ``asymptotic cut" with the word ``large scale separator" in Definition \ref{dimensiongrad} then we have the definition of \emph{large scale inductive dimension}. Proposition \ref{septocut} shows that the asymptotic dimensiongrad is always less than or equal to the large scale inductive dimension.
\section{Set Theoretic Coupling}\label{sec4}
\begin{definition}\label{coup}
Let $G$ and $H$ be two groups. Assume that $\mathcal{F}$ and $\mathcal{L}$ are two generating sets on $G$ and $H$ respectively. We call the nonempty set $X$ a \emph{set theoretic coupling} for AS.R spaces $(G,\lambda_{\mathcal{F}})$ and $(H,\lambda_{\mathcal{L}})$ if $G$ and $H$ act on $X$ from right and left respectively, such that these actions commute with each other and there exists a subset $O\subseteq X$ such that the following properties hold,\\
i) $GO=OH=X$,\\
ii) For each $F\in \hat{\mathcal{F}}$ and $K\in \hat{\mathcal{L}}$,
$$F_{G}(K)=\{g\in G\mid gOK\cap OK\neq \emptyset\}\in \hat{\mathcal{F}}$$
$$F_{H}(F)=\{h\in H\mid FOh\cap FO\neq \emptyset\}\in \hat{\mathcal{L}}$$
iii) For each $F\in \mathcal{F}$ there exists some $E_{H}(F)\in \mathcal{L}$ such that
$$FO\subseteq OE_{H}(F)$$
and for each $K\in \mathcal{L}$ there exists some $E_{G}(K)\in \mathcal{F}$ such that
$$OK\subseteq E_{G}(K)O$$
\end{definition}
The following proposition shows that Definition \ref{coup} is equivalent to Definition 5.5.1 of \cite{loh} if we just deal with the finitely generated groups.
\begin{proposition}
Let $G$ and $H$ be two groups and let $\mathcal{F}$ and $\mathcal{F}^{\prime}$ denote the family of all finite subsets of $G$ and $H$ respectively. Suppose that $G$ and $H$ act from left and right on the set $X$ respectively and these actions commute. Then $X$ is a set theoretic coupling for $(G,\lambda_{\mathcal{F}})$ and $(H,\lambda_{\mathcal{F}^{\prime}})$ if and only if there exists a subset $O\subseteq X$ such that the following properties hold,\\
i) $GO=OH=X$,\\
ii) The subsets $F_{G}=\{g\in G\mid gO\cap O\neq \emptyset\}$ and $F_{H}=\{h\in H\mid Oh\cap O\neq \emptyset \}$ are finite subsets of $G$ and $H$ respectively.\\
iii) For all $g\in G$ and $h\in H$ there exist finite subsets $F_{H}(g)\subseteq H$ and $F_{G}(h)\subseteq G$ such that $gO\subseteq OF_{H}(g)$ and $Oh\subseteq F_{G}(h)O$.
\end{proposition}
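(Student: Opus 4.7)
The plan is to verify each of the three conditions in each direction, exploiting that when $\mathcal{F}$ and $\mathcal{F}^{\prime}$ consist of all finite subsets one has $\hat{\mathcal{F}}=\mathcal{F}$ and $\hat{\mathcal{F}}^{\prime}=\mathcal{F}^{\prime}$, so ``belongs to $\hat{\mathcal{F}}$'' simply means ``finite.''

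The forward direction (Definition \ref{coup} implies (i)--(iii)) is routine. Part (i) is identical. Part (ii) follows by specializing Definition \ref{coup}(ii) to $F=\{e_{G}\}$ and $K=\{e_{H}\}$, which recovers precisely the sets $F_{G}$ and $F_{H}$ appearing in the simpler condition. Part (iii) follows by specializing Definition \ref{coup}(iii) to the singletons $F=\{g\}$ and $K=\{h\}$, and taking $F_{H}(g):=E_{H}(\{g\})$ and $F_{G}(h):=E_{G}(\{h\})$.

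For the reverse direction, (i) is again immediate. For (iii) I use additivity: given a finite $F=\{g_{1},\dots,g_{n}\}\subseteq G$, set $E_{H}(F):=\bigcup_{i=1}^{n}F_{H}(g_{i})$. This is finite and $FO=\bigcup_{i}g_{i}O\subseteq\bigcup_{i}OF_{H}(g_{i})=OE_{H}(F)$; the mirror construction furnishes $E_{G}(K)$ for finite $K\subseteq H$.

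The only step with real content is (ii) in the reverse direction. Given a finite $K=\{h_{1},\dots,h_{m}\}$ and any $g\in F_{G}(K)$, the relation $gOK\cap OK\neq\emptyset$ unpacks to $g o_{1}h_{j}=o_{2}h_{i}$ for some $o_{1},o_{2}\in O$ and indices $i,j$, i.e.\ $gO(h_{j}h_{i}^{-1})\cap O\neq\emptyset$. Applying the already-established (iii) to the single element $h_{j}h_{i}^{-1}$ yields a finite $F_{G}(h_{j}h_{i}^{-1})\subseteq G$ with $O(h_{j}h_{i}^{-1})\subseteq F_{G}(h_{j}h_{i}^{-1})\cdot O$, so there exists $g^{\prime}\in F_{G}(h_{j}h_{i}^{-1})$ with $gg^{\prime}O\cap O\neq\emptyset$, which forces $gg^{\prime}\in F_{G}$. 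Therefore
$$F_{G}(K)\subseteq F_{G}\cdot\bigcup_{i,j}F_{G}(h_{j}h_{i}^{-1})^{-1},$$
a finite subset of $G$; finiteness of $F_{H}(F)$ is handled by the symmetric computation on the other side. I do not anticipate any genuine obstacle; the only point to mind is the order of the verification, since (ii) in the reverse direction must invoke (iii) to translate $OK$ across the $G$-action, so (iii) should be proved first.
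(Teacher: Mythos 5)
Your proof is correct and follows essentially the same route as the paper's: the forward direction by specializing Definition \ref{coup} to $\{e_G\}$, $\{e_H\}$ and singletons, and the converse by using property (iii) to rewrite the $H$-translates of $O$ as $G$-translates and then reducing membership in $F_G(K)$ to membership in a finite translate of $F_G$. The only cosmetic difference is that the paper sets $D=\bigcup_i F_G(h_i)$ and obtains $F_G(L)\subseteq DF_GD^{-1}$, whereas you work with the products $h_jh_i^{-1}$ and obtain $F_G(K)\subseteq F_G\cdot\bigcup_{i,j}F_G(h_jh_i^{-1})^{-1}$; both are the same mechanism.
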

\begin{proof}
The `only if' part of Proposition is a straightforward consequence of the fact $F_{G}=F_{G}(\{e_{H}\})$ and $F_{H}=F_{H}(\{e_{G}\})$, where $e_{G}$ and $e_{H}$ are neutral elements of $G$ and $H$ respectively. To prove the converse suppose that $O\subseteq X$ is such that $GO=OH=X$ and it satisfies the properties i), ii) and iii) of this proposition. Assume that $L\subseteq H$ is finite and $L=\{h_{1},...,h_{n}\}$. By the property iii) of this proposition there are finite subsets $F_{G}(h_{i})$ such that $Oh_{i}\subseteq F_{G}(h_{i})O$, for $i\in \{1,...,n\}$. Let $D=\cup_{i=1}^{n}F_{G}(h_{i})$. Now suppose that $g\in F_{G}(L)$. Since $OL\subseteq DO$, $gDO\cap DO\neq \emptyset$. It shows that there are $d_{1},d_{2}\in D$ and $o_{1},o_{2}\in O$ such that $gd_{1}o_{1}=d_{2}o_{2}$. So $d_{2}^{-1}gd_{1}O\cap O\neq \emptyset$, which shows $d_{2}^{-1}gd_{1}\in F_{G}$. Therefore $g\in DF_{G}D^{-1}$ and this shows $F_{G}(L)\subseteq DF_{G}D^{-1}$. Since $DF_{G}D^{-1}$ is finite, $F_{G}(L)$ is also finite. Similarly one can show for each finite subset $K$ of $G$, $F_{H}(K)$ is finite. The property iii) of Definition \ref{coup} is an immediate consequence of the property iii) of this proposition.
\end{proof}
\begin{proposition}\label{equiva}
Suppose that $G$ and $H$ are two groups and $\mathcal{F}$ and $\mathcal{L}$ are two generating families on $G$ and $H$ respectively. If $(G,\lambda_{\mathcal{F}})$ and $(H,\lambda_{\mathcal{L}})$ admit a set theoretic coupling then they are asymptotically equivalent.
\end{proposition}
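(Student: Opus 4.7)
The plan is to build the asymptotic equivalence as an explicit pair of maps $f:G\to H$ and $\tilde{f}:H\to G$ extracted from the coupling, and then reduce the proof to a single application of property (ii).

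First, I would fix a base point $o_{0}\in O$ and use axiom (i) together with the axiom of choice to define $f(g)\in H$ and $o(g)\in O$ by $o_{0}g=f(g)o(g)$, which is possible because $o_{0}g\in OH$. The map $\tilde{f}$ I would define by an equation of the \emph{same shape}: using $h^{-1}o_{0}\in GO$, choose $\tilde{o}(h)\in O$ and $\tilde{f}(h)\in G$ with $h^{-1}o_{0}=\tilde{o}(h)\,\tilde{f}(h)^{-1}$, equivalently $o_{0}\tilde{f}(h)=h\,\tilde{o}(h)$. This symmetric choice is essential; the more obvious definition $ho_{0}=\tilde{o}(h)\tilde{f}(h)$ combines with the $f$-equation to produce a relation involving $h^{2}$ from which no uniform bound survives.

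Next I would verify $f$ is an AS.R mapping. Given $A\subseteq BF$ with $F\in\mathcal{F}$ and $a=bk\in A$, expanding $o_{0}a=f(b)\,(o(b)k)$ and applying property (iii) to rewrite $o(b)k\in OF$ as $h''o'$ with $h''\in E_{H}(F)$, $o'\in O$, then comparing with $o_{0}a=f(a)o(a)$, gives $(f(a)^{-1}f(b)h'')o'=o(a)\in O$; property (ii) then forces $f(a)^{-1}f(b)h''\in F_{H}(\{e_{G}\})\in\hat{\mathcal{L}}$, so $f(A)\subseteq f(B)\cdot E_{H}(F)F_{H}(\{e_{G}\})^{-1}$ and (by the symmetric inclusion) $f(A)\,\lambda_{\mathcal{L}}\,f(B)$. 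Properness of $f$ will come from a parallel manipulation of $o_{0}g,o_{0}g_{0}\in LO$ for $g,g_{0}\in f^{-1}(L)$, which places $g_{0}^{-1}g$ in some $F_{G}(K')\in\hat{\mathcal{F}}$ via property (ii) applied to $K'=L^{-1}L\cup\{e_{H}\}$. The arguments for $\tilde{f}$ are mirror-symmetric.

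The heart of the proof will be the identity $\tilde{f}\circ f\sim\id_{G}$. With $h=f(g)$ and $g'=\tilde{f}(h)$, the defining equations $o_{0}g=ho(g)$ and $o_{0}g'=h\tilde{o}(h)$ each yield, upon applying $h^{-1}$ on the left, $h^{-1}o_{0}=o(g)g^{-1}=\tilde{o}(h)(g')^{-1}$; multiplying by $g'$ on the right produces the clean identity
\[
o(g)\,(g^{-1}g')=\tilde{o}(h),
\]
so $g^{-1}g'$ carries $o(g)\in O$ to $\tilde{o}(h)\in O$. Property (ii) with $K=\{e_{H}\}$ then forces $g^{-1}\tilde{f}(f(g))\in F_{G}(\{e_{H}\})\in\hat{\mathcal{F}}$ uniformly in $g$, which passes immediately to $\tilde{f}(f(A))\,\lambda_{\mathcal{F}}\,A$ for every $A\subseteq G$; the mirror computation gives $f(\tilde{f}(B))\,\lambda_{\mathcal{L}}\,B$ for every $B\subseteq H$. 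The only delicate point is arranging the ``same shape'' defining equations for $f$ and $\tilde{f}$; once that symmetry is in place, the whole proof reduces to repeated applications of properties (ii) and (iii).
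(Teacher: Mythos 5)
Your proposal is correct and takes essentially the same route as the paper: the paper likewise fixes a basepoint $x\in O$, defines $f$ and $\tilde f$ by choosing decompositions of orbit points via $GO=OH=X$, uses property (iii) to show $f$ and $\tilde f$ are AS.R mappings, and uses property (ii) to get properness together with the uniform bound $g^{-1}\tilde f(f(g))\in F_G(\{e_H\})$ and its mirror for $f\circ\tilde f$. Your only deviations are cosmetic --- a mirrored left/right convention in the defining equations and a reference point $g_0\in f^{-1}(L)$ in the properness step where the paper compares directly with the basepoint --- so the two arguments coincide in substance.
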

\begin{proof}
Suppose that $X$ is a set theoretic coupling for $(G,\lambda_{\mathcal{F}})$ and $(H,\lambda_{\mathcal{L}})$. Our notation here have similar meanings to the notation in Definition \ref{coup}. Let $x\in O$ and define a map $f:G\rightarrow H$ such that for each $g\in G$, $g^{-1}x\in O(f(g))^{-1}$. Suppose that $D\in \hat{\mathcal{L}}$. If $g\in f^{-1}(D)$, $g^{-1}x\in OD^{-1}$. Let $L=D^{-1}\cup \{e_{H}\}$ ($e_{H}$ is the neutral element of $H$). So $x\in gOL$ and $x\in OL$. Thus $gOL\cap OL\neq \emptyset$, and it shows $g\in F_{G}(L)$. Therefore $f^{-1}(D)\subseteq F_{G}(L)$ and it shows $f^{-1}(D)$ is a bounded subset of $G$. Now suppose that $A,B\subseteq G$ and $A\lambda_{\mathcal{F}}B$. So there exists some $F\in \mathcal{F}$ such that $A\subseteq BF$ and $B\subseteq AF$. Without loss of generality we can assume $F=F^{-1}$. So for each $a\in A$ there exists some $b\in B$ such that $b^{-1}a\in F$ and $a^{-1}b\in F$. We have $a^{-1}xf(b)=a^{-1}bb^{-1}xf(b)\in a^{-1}bO\subseteq FO$. Besides $FO\subseteq OE_{H}(F)$ and $a^{-1}xf(b)\in O(f(a))^{-1}f(b)$. Let $K=E_{H}(F)\cup \{e_{H}\}$. So $a^{-1}xf(b)\in OK(f(a))^{-1}f(b)$ and $a^{-1}xf(b)\in OK$, therefore $(f(a))^{-1}f(b)\in F_{H}(K)$. If we let $K^{\prime}=F_{H}(K)\cup (F_{H}(K))^{-1}$ we have $f(A)\subseteq f(B)K^{\prime}$. Similarly one can show that $f(B)\subseteq f(A)K^{\prime}$, thus $f(A)\lambda_{\mathcal{L}}f(B)$. Thus $f$ is an AS.R mapping. For each $h\in H$ choose $\tilde{f}(h)\in G$ such that $xh\in \tilde{f}(h)O$. Similar argument can show that $\tilde{f}$ is an AS.R mapping. Now suppose that $A\subseteq G$. Let $a\in A$. We have $a^{-1}x\in O(f(a))^{-1}$ so $a^{-1}xf(a)\in O$. Besides $xf(a)\in \tilde{f}(f(a))O$ so $a^{-1}xf(a)\in a^{-1}(\tilde{f}(f(a)))O$. This shows that $a^{-1}\tilde{f}(f(a))\in F_{G}(e_{H})=P$. We showed $\tilde{f}(f(a))\in aP$ and $a\in \tilde{f}(f(a))P^{-1}$, for each $a\in A$. Thus $A\subseteq \tilde{f}(f(A))P^{-1}$ and $\tilde{f}(f(A))\subseteq AP$. We know from the definition of set theoretic coupling $M=P\cup P^{-1}\in \mathcal{F}$. Thus $A\lambda_{\mathcal{F}}\tilde{f}(f(A))$ for all $A\subseteq G$. One can show that $f(\tilde{f}(B))\lambda_{\mathcal{L}}B$ for all $B\subseteq H$ in similar way. Thus $f$ is an asymptotic equivalence and $\tilde{f}$ is its coarse inverse.
\end{proof}
Let $G$ be a (Hausdorff) topological group that acts on a locally compact topological space on the right (or on the left). This action is called to be continuous if the map $(g,x)\rightarrow gx$ from $G\times X$ to $X$ is continuous and it is called cocompact if there exists a compact subset $K$ of $X$ such that $GK=X$. The action of $G$ on $X$ is called proper if the set $\{g\in G\mid gD\bigcap D\neq \emptyset\}$ has compact closure for all compact subsets $D$ of $X$.
\begin{definition}
Suppose that $G$ and $H$ are two (Hausdorff) topological groups and $G$ and $H$ act continuously, properly and cocompactly on the locally compact topological space $X$ from left and right respectively. If the actions of $G$ and $H$ on $X$ commute, then $X$ is called a topological coupling for $G$ and $H$.
\end{definition}
\begin{proposition}
Let $G$ and $H$ be two (Hausdorff) topological groups and let $X$ be a locally compact topological space. Then if $X$ is a topological coupling for $G$ and $H$ then it is a set theoretic coupling for $(G,\lambda_{\mathcal{K}})$ and $(H,\lambda_{\mathcal{K}}^{\prime})$, where $\lambda_{\mathcal{K}}$ and $\lambda_{\mathcal{K}}^{\prime}$ denote the group compact AS.Rs on $G$ and $H$ respectively.
\end{proposition}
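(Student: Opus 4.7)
\medskip

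\noindent\textbf{Proof proposal.} The plan is to produce a single compact set $O\subseteq X$ witnessing all three properties by combining cocompactness (for existence of $O$ and for comparing $FO$ with the $H$-orbit decomposition), properness (for the relative compactness of the stabilizer-like sets), and continuity of the actions (to keep everything inside compact sets). Throughout, recall that $\hat{\mathcal{K}}$ is just the family of all subsets with compact closure, so every set we must place in $\hat{\mathcal{K}}$ need only be shown to be relatively compact.

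First I would construct $O$. Cocompactness of the left $G$-action gives a compact $O_1\subseteq X$ with $GO_1=X$, and cocompactness of the right $H$-action gives a compact $O_2\subseteq X$ with $O_2H=X$. Set $O=O_1\cup O_2$; this is compact, and property (i) is immediate since $GO\supseteq GO_1=X$ and $OH\supseteq O_2H=X$.

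Next I would verify (ii). Let $F\in\hat{\mathcal{K}}$ and $K\in\hat{\mathcal{K}}'$. Then $\overline{F}$ and $\overline{K}$ are compact, so by continuity of the two actions the set $D=\overline{F}\,O\,\overline{K}$ is the continuous image of the compact product $\overline{F}\times O\times\overline{K}$ and is therefore compact. Since $OK\subseteq D$,
\[
F_G(K)=\{g\in G\mid gOK\cap OK\neq\emptyset\}\subseteq\{g\in G\mid gD\cap D\neq\emptyset\},
\]
and the right-hand set has compact closure by properness of the $G$-action. An entirely symmetric argument using the compact set $\overline{F}\,O$ (in place of $D$) shows $F_H(F)\in\hat{\mathcal{K}}'$.

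The substantive step is (iii); this is where cocompactness of the \emph{other} action is essential, and it is the part I expect to require the most care. Fix $F\in\mathcal{K}$ and define
\[
E_H(F)=\{h\in H\mid Oh\cap FO\neq\emptyset\}.
\]
Given any $y\in FO$, cocompactness on the right gives $y=oh$ for some $o\in O$, $h\in H$; then $y\in Oh\cap FO$, so $h\in E_H(F)$ and $y\in OE_H(F)$. This proves the inclusion $FO\subseteq OE_H(F)$. To show $E_H(F)\in\mathcal{K}'$, let $D'=O\cup\overline{F}\,O$, which is compact. If $h\in E_H(F)$, then there are $o_1,o_2\in O$ and $f\in F$ with $o_1h=fo_2\in FO\subseteq D'$, so $D'h\cap D'\supseteq\{o_1h\}\neq\emptyset$, and properness of the $H$-action forces $E_H(F)$ into a relatively compact set. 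The construction of $E_G(K)$ with $OK\subseteq E_G(K)O$ is completely analogous, swapping the roles of $G$ and $H$ and of the two cocompactness hypotheses. This completes the verification of all three axioms of Definition \ref{coup}.
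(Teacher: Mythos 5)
Your proposal is correct, but it verifies property (iii) by a genuinely different mechanism than the paper. The paper prepares for (iii) already in the construction of $O$: it covers a compact fundamental set by finitely many relatively compact open sets and takes $O=\overline{W}$ for the resulting open set $W$ with $GW=WH=X$; then for compact $F\subseteq G$ the compact set $FO$ is covered by the \emph{open} translates $Wh$, $h\in H$, and a finite subcover produces a \emph{finite} set $E_H(F)=\{h_1,\dots,h_n\}$ with $FO\subseteq\bigcup_i Wh_i\subseteq OE_H(F)$. You instead keep $O$ as a bare union of two compact fundamental sets and define $E_H(F)$ as the full transporter $\{h\in H\mid Oh\cap FO\neq\emptyset\}$, getting the inclusion $FO\subseteq OE_H(F)$ from cocompactness ($OH=X$) and the membership $E_H(F)\in\hat{\mathcal{K}}'$ from properness applied to the compact set $O\cup\overline{F}O$. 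Both arguments are sound; the paper's buys a finite (rather than merely relatively compact) $E_H(F)$ and needs $O$ to have the covering-by-open-translates property, while yours dispenses with the interior-fattening step entirely and leans on properness, which is arguably the more uniform use of the hypotheses. Your treatments of (i) and (ii) coincide with the paper's in substance. One small slip in (ii): for $F_G(K)$ you take $D=\overline{F}\,O\,\overline{K}$ and assert $OK\subseteq D$, which requires $e_G\in\overline{F}$ and is not given; since $F_G(K)$ does not involve $F$ at all, simply take $D=O\overline{K}$ (and, as you do, $\overline{F}O$ for $F_H(F)$) and the properness argument goes through unchanged.
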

\begin{proof}
Since the actions of $G$ and $H$ are cocompact so clearly there exists a compact subset $K$ of $X$ such that $GK=KH=X$. For each point $x\in K$ suppose that $U_{x}$ denotes an open subset of $X$ with compact closure such that $x\in U_{x}$. Since $K$ is compact there are $x_{1},...,x_{n}\in K$ such that $K\subseteq \bigcup_{i=1}^{n}U_{x_{i}}$. So $W=\bigcup_{i=1}^{n}U_{x_{i}}$ is an open subset of $X$ such that $GW=WH=X$ and $\overline{W}$ is compact. Let $O=\overline{W}$. Clearly $O$ satisfies the property i) of \ref{coup}. Now suppose that $D\subseteq G$ is compact. Since these actions are continuous, $DO\subseteq X$ is compact and the properness of the action of $H$ leads to $F_{H}(D)$ is relatively compact. Similar argument shows that $F_{G}(D^{\prime})$ is relatively compact for each compact subset $D^{\prime}$ of $H$. This proves the property ii) of \ref{coup}. For proving the property iii) of \ref{coup} suppose that $D\subseteq G$ is compact. Since $WH=X$ and $DO$ is compact there are $h_{1},...,h_{n}\in H$ such that $DO\subseteq \bigcup_{i=1}^{n}Wh_{i}$. Clearly the set $E_{H}(D)=\{h_{1},...,h_{n}\}$ is the desired set for the property iii) of \ref{coup}. Similarly for each compact $D^{\prime}\subseteq H$ one can find a finite set $E_{G}(D^{\prime})$ such that $OD^{\prime}\subseteq E_{G}(D^{\prime})O$.
\end{proof}
\begin{example}\label{akhar}
Let $G$ be a locally compact Hausdorff topological group and suppose that $H$ and $K$ are two closed normal subgroups of $G$. If $G/H$ and $G/K$ are two compact topological groups then $G$ is a topological coupling for $H$ and $K$ with their group compact AS.Rs as topological groups. To see this it can be shown that there exists a compact subset $O$ of $G$ such that $\pi_{H}(O)=G/H$ and $\pi_{K}(O)=G/K$, where $\pi_{H}:G\rightarrow G/H$ and $\pi_{K}:G\rightarrow G/K$ are quotient maps (for example see Lemma 7.2.5 of \cite{dikran}). Clearly $HO=OK=G$, so the trivial actions of $H$ and $K$ on $G$ is cocompact. Let $D\subseteq G$ be a compact subset. Since $L=\{h\in H\mid hD\bigcap D\neq \emptyset\}=DD^{-1}\bigcap H$ and $DD^{-1}$ is a compact subset of $G$, $L$ has a compact closure and it shows that $H$ acts properly on $G$. Similarly one can shows that $K$ acts properly on $G$ too. Therefore Proposition \ref{equiva} shows that two topological groups $H$ and $K$ are asymptotically equivalent.
\end{example}
\begin{example}
Recall that a discrete subgroup $H$ of a locally compact group $G$ is called a \emph{uniform lattice} if there exists a compact subset $K$ of $G$ such that $HK=G$ ($KH=G$) (\S 5.5.1 of \cite{loh}). It is evident from Example \ref{akhar} and Proposition \ref{equiva} that two uniform lattices in a locally compact topological group are asymptotically equivalent.
\end{example}
\section*{ACKNOWLEDGMENTS}
The author wishes to express his gratitude to the anonymous referee for his (or her) several useful comments. The author was supported by Babol Noshirvani University of Technology with Grant Program No. BNUT/393072/99.

\end{document}